\documentclass[a4paper,11pt,makeidx]{amsart}

\oddsidemargin -0.4cm
\evensidemargin -0.4cm
\textwidth 16.7cm
\headsep 0.8 cm
\addtolength{\textheight}{2cm} %for USA 
\addtolength{\voffset}{-0,5cm} %for USA

\usepackage{amssymb,eucal,graphicx}

\usepackage{ifthen}
\usepackage{epsfig}
\usepackage{amsmath}
\usepackage{amsfonts}
\raggedbottom
\usepackage[T1]{fontenc}

\RequirePackage{amsthm}
\RequirePackage{amssymb}
\usepackage[all,ps]{xy}
\usepackage{latexsym}

%
%%%%%%%%%%%%%%%%%%%%%%%%%%%%%%%%%%%%%%%%%
%

\numberwithin{equation}{section}

\newtheorem{theorem}[equation]{Theorem}

\newtheorem{corollary}[equation]{Corollary}

\newtheorem{lemma}[equation]{Lemma}

\newtheorem{proposition}[equation]{Proposition}

\newtheorem{definition}[equation]{Definition}

\newtheorem{question}[equation]{Question}

\newtheorem{remark}[equation]{Remark}

\newtheorem{rmk}[equation]{Remark}

\def\rk{\operatorname{rk}}

\def\Sing{\operatorname{Sing}}

\def\log{\operatorname{log}}

                   % product (fiber)
                  % dual
\def\iso{\simeq}

\def\+{\oplus}                   % direct sum
\def\*{\otimes}                  % tensor product
       % inclusion
\def\hpil{\longrightarrow}       % ----->

\def\Pic{\operatorname{Pic}}

\newcommand\Ii{{\mathcal I}}
\newcommand\Oc{{\mathcal O}}

\newcommand\KC{\mathcal KC}
\newcommand\B{\mathcal B}
\newcommand\M{\mathcal M}

\newcommand\Ha{\mathcal H}

\newcommand\N{\mathcal N}

\newcommand\T{\mathcal T}
\newcommand\V{\mathcal V}

\newcommand\ZZ{\mathbb{Z}}
\newcommand\CC{\mathbb{C}}

\newcommand\PP{\mathbb P}

\begin{document}

\title{Nodal curves with general moduli on $K3$ surfaces}

\author[F.~Flamini, A.~L.~Knutsen, G.~Pacienza, E.~Sernesi]{Flaminio Flamini$^{(1)}$, Andreas Leopold Knutsen$^{(2)}$, 
Gianluca Pacienza$^{(3)}$ and Edoardo Sernesi$^{(4)}$}

\thanks{{\it 2000 Mathematics Subject Classification} : Primary 14H10, 14H51, 14J28. 
Secondary 14C05, 14D15.}

\thanks{(1) and (4): Member of MIUR-GNSAGA at INdAM "F. Severi"}

\thanks{(2): Research supported by a Marie Curie Intra-European Fellowship within the 6th Framework Programme}

\thanks{(3): During the preparation of the paper the author benefitted from an "accueil en d\'el\'egation au CNRS"}

%
%%%%%%%%%%%%%%%%%%%%%%(ABSTRACT)%%%%%%%%%%%%%%%%%%%%%%%%%%%%%%%%%

\vskip -30 pt

\begin{abstract} {We investigate the modular properties of nodal curves on a low genus $K3$ surface. We prove that a general genus $g$ curve 
$C$ is the normalization of a $\delta$-nodal curve $X$ sitting 
on a primitively polarized $K3$ surface $S$ of degree $2p-2$, for $2 \le g=p-\delta< p \le 11$. 
The proof is based 
on a local deformation-theoretic analysis of the 
map from the stack of pairs  $(S,X)$ to the moduli space of curves $\M_g$ that associates to $X$ the isomorphism class $[C]$ of its  normalization.}

\end{abstract}

\maketitle

%%%%%%%%%%%%%%%%%%%%%%SECTION 0: INTRODUZIONE%%%%%%%%%%%%%%%
%
\section{Introduction}\label{S:intro}
%
%%%%%%%%%%%%%%%%%%%%%%%%%%%%%%%%%%%%%%%%%%%%%%%%%%%
 Nonsingular curves of low genus on a K3 surface have  interesting modular properties,  
  related to the existence of Fano 3-folds of index one of the corresponding sectional genus. These properties have been investigated by  Mukai  who settled, in particular, a  problem raised by Mayer in \cite{Ma}.  He showed that  a general curve of genus $g \le 9$ or $g=11$ can be embedded as a nonsingular curve in a K3 surface, and that this is not possible for curves of genus $g=10$, despite an obvious count of constants indicating  the opposite.   These facts have been proved again by Beauville in the last section of \cite{B} from a different point of view, by means of a local deformation-theoretic analysis. 
 
 In the  present paper we take a  point of view similar to Beauville's with the purpose of studying the corresponding questions about moduli of \emph{singular (nodal) curves} of low genus on a K3 surface. 
 To this end we
 consider the  following algebraic stacks: 
\vspace{0,4cm} 

 \begin{itemize}
 \item[${\B}_p$:]   the stack     of  smooth $K3$ surfaces $S$ marked by a globally generated, primitive line bundle $H$ of sectional genus $p \ge 2$; it is smooth and irreducible of dimension $19$.  
 \item[]
 \item[$\V_{p,\delta}$:] the stack of pairs  $(S,X)$  such that
$(S,H) \in {\B}_p$ and $X \in |H|$ is an irreducible curve with $\delta$ nodes and no other singularities,  for given $0 \le \delta \le p$;  it is smooth of dimension 
  $19+g$, where $g=p - \delta$.
\end{itemize}
\vspace{0,4cm}  

  We also consider an {\'e}tale atlas $V_{p,\delta} \to \V_{p,\delta}$ and the   morphisms:
 \[
\xymatrix{
 V_{p,\delta}\ar[d]^{\pi_\delta} \ar[r]^{c_{p,\delta}} &   \M_g \\
 {\B}_p}
\]
where $\M_g$ is the moduli stack of nonsingular curves of genus $g =p-\delta$;  $c_{p,\delta}$ and $\pi_\delta$ are  induced by associating to a point parametrizing a pair $(S,X)$ the isomorphism class of the normalization of $X$ and $[S]$ respectively.

  We study this configuration when $3 \le p \le 11$. Our main result is the following:
 
 \begin{theorem}
 Let  $3 \le p \le 11$ and $0 \le \delta \le p-2$, so that  $2 \le g =p-\delta \le p$.  Let 
 $V \subset  V_{p,\delta}$ be an irreducible component, and let 
 \[
 c_{p,\delta|V}: V \longrightarrow \M_g
 \] be the restriction to $V$ of the morphism $c_{p,\delta}$.  Then, for any $2 \le g < p \le 11$,  
 $c_{p,\delta|V}$ is dominant.  In particular, for the general fibre $F_{p,\delta}$ of $c_{p,\delta |V}$ we have:
 \[
 \dim(F_{p,\delta}) = 22 - 2g = {\rm expdim}(F_{p,\delta})
 \]

 \end{theorem}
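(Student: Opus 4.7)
\emph{Proof plan.} The strategy is to prove that the differential of $c_{p,\delta|V}$ is surjective on tangent spaces at a general point of $V$. Since $V$ is irreducible of dimension $19+g$ and $\M_g$ is irreducible of dimension $3g-3$, surjectivity of the differential will simultaneously imply dominance of $c_{p,\delta|V}$ and give
\[
\dim F_{p,\delta} = (19+g)-(3g-3) = 22-2g = \expdim(F_{p,\delta}).
\]

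Fix $(S,X) \in V$, let $\nu\colon C \to X \hookrightarrow S$ be the composition of the normalization of $X$ with its inclusion in $S$, and let $N_\nu$ be the normal sheaf of $\nu$, fitting into the fundamental sequence
\[
0 \to T_C \to \nu^*T_S \to N_\nu \to 0, \qquad (\ast)
\]
with $N_\nu \iso \nu^*H\otimes\cO_C(-A)$, where $A=\nu^{-1}(\Sing X)$ has degree $2\delta$. Standard deformation theory of the triple $(S,C,\nu)$ fits $T_{(S,X)}V_{p,\delta}$ into the exact sequence
\[
0 \to H^0(C, N_\nu) \to T_{(S,X)}V_{p,\delta} \to T_{[S]}\B_p \to 0,
\]
and the differential $dc_{p,\delta}$ acts on $H^0(C, N_\nu)$ as the connecting map $\partial$ of $(\ast)$, while sending a lift of $\xi \in T_{[S]}\B_p \subset H^1(S,T_S)$ to a class $\eta \in H^1(C, T_C)$ characterized by $\beta(\eta)=\nu^*(\xi)$ in $H^1(C,\nu^*T_S)$, where $\beta\colon H^1(T_C) \to H^1(\nu^*T_S)$ is induced by $T_C\hookrightarrow \nu^*T_S$.

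Combining this description with the long exact cohomology of $(\ast)$, in which $\im(\beta)=\ker(H^1(\nu^*T_S)\to H^1(N_\nu))$, a diagram chase identifies the cokernel of $dc_{p,\delta}$ with the cokernel of the induced map
\[
\nu^*\colon T_{[S]}\B_p \longrightarrow \im(\beta) \subseteq H^1(C,\nu^*T_S).
\]
By Serre duality on $S$ (using $K_S \iso \cO_S$) and on $C$, vanishing of this cokernel is equivalent to an injectivity statement for the dual ``Gaussian-type'' map landing in $H^1(S,\Omega_S)$, which can be analyzed via the Lazarsfeld--Mukai package on $S$. The hypotheses $p\le 11$ and $g<p$ enter essentially here: the condition $\delta \ge 1$ gives $\deg(\nu^*H)>2g-2$, pushing $\nu^*H$ and its associated Lazarsfeld--Mukai bundle outside the Brill--Noether regime that produces the Mukai obstruction in the smooth case $\delta=0$, $g=p=10$.

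The principal obstacle is verifying this injectivity at a general point of \emph{each} irreducible component $V$ of $V_{p,\delta}$. Since $V$ is only assumed to be a component, one must produce such a good point in every $V$. The expected approach is a degeneration argument within $V$ --- for instance partial smoothing of a node, or specialization of $(S,X)$ to an explicit Mukai-type $K3$ model --- that places $(S,X)$ in a configuration where the injectivity can be checked directly; dominance of $c_{p,\delta|V}$ and the fibre dimension formula then follow from upper semicontinuity.
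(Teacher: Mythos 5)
There is a genuine gap: your writeup sets up the correct deformation-theoretic framework (the sequence $0 \to H^0(N_\nu) \to T_{(S,X)}V_{p,\delta} \to T_{[S]}\B_p \to 0$ and the identification of the differential), but it stops exactly where the content of the theorem lies. You reduce everything to an ``injectivity statement for a dual Gaussian-type map'' to be ``analyzed via the Lazarsfeld--Mukai package,'' and you never prove it; the remark that $\delta\ge 1$ forces $\deg\nu^*H=2p-2>2g-2$ is only a heuristic and cannot suffice, because the relevant obstruction does not live on $C$ alone. In the paper the surjectivity of the differential is reduced, via the blow-up $\widetilde S\to S$ at $N=\Sing(X)$ and Serre duality, to the vanishing $H^2(\mu_N^*(\T_S)(-C))\cong H^0(\Ii_{N/S}\otimes\Omega^1_S(H))=0$, and this is where the deep input enters: Mukai's theorem in Beauville's formulation (Proposition \ref{prop:beaumuk}) gives $h^0(\Omega^1_S(H))=0$ for general $(S,H)$ when $3\le p\le 9$ or $p=11$, which settles those cases outright, while for $p=10$ one has $h^0(\Omega^1_S(H))=1$ and one must argue separately that the nodes of a general member of \emph{any} component of the Severi variety cannot all lie on the zero scheme of that unique section, because the curves move in a family of dimension $g\ge 2$. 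Your proposal contains neither this cohomological input nor any substitute for the $p=10$ argument, and note that for $p=10$ a nonzero section of $\Omega^1_S(H)$ genuinely exists, so no degree or Brill--Noether-genericity argument about $\nu^*H$ can by itself kill the potential obstruction.

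The second issue is the ``every component $V$'' problem, which you yourself flag as the principal obstacle and propose to handle by degeneration to Mukai models plus semicontinuity; no such argument is needed, and leaving it open means the theorem is not proved for an arbitrary component. The paper disposes of it structurally: since $h^0(\T_S\langle X\rangle)=h^2(\T_S\langle X\rangle)=0$ for \emph{every} pair $(S,X)$ (Proposition \ref{prop:smst}), the stack $\V_{p,\delta}$ is smooth of dimension $19+g$ and the projection $\pi_\delta$ is smooth, so every irreducible component dominates $\B_p^0$; hence the general point of any component lies over a general primitively polarized $K3$, where Proposition \ref{prop:beaumuk} (or the moving-nodes argument for $p=10$) applies directly. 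You should replace the proposed degeneration step by this dominance statement, and replace the unproved Lazarsfeld--Mukai injectivity by the explicit vanishing $H^0(\Ii_{N/S}\otimes\Omega^1_S(H))=0$ together with its two-case verification.
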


 The theorem is proved  by  studying deformations of a pair
 $(S,X)$  in  $\V_{p,\delta}$. The locally trivial deformation  theory of such a pair  is controlled by a locally free sheaf of rank 2, namely the sheaf $\T_S\langle X\rangle$ of tangent vectors to $S$ that are tangent to $X$.  Specifically, $H^1(S, \T_S\langle X\rangle)$ is the tangent space to  $\V_{p,\delta}$ at $(S,X)$ and 
 $H^2(S, \T_S\langle X\rangle)$ is an obstruction space. These cohomology groups are studied by pulling back $\T_S\langle X\rangle$ to the blow-up $\tilde S$ of $S$ at the singular points of $X$. 
 Then the local study of the morphism $c_{p,\delta}$ is carried out on $\tilde S$, and the theorem is   reduced to proving the vanishing of an appropriate cohomology group.  
 
 The case $p=11,\ \delta=1$ of the theorem appears to be somehow unexpected.  Note that, in fact, the theorem says that every irreducible component $V$ of $V_{11,1}$  dominates $\M_{10}$. This contrasts the fact that, according to Mukai,   $V_{10,0}$ does not dominate $\M_{10}$.   
 
 Another interesting case is $p=10, \delta=1$.  Again the theorem says that every irreducible component of $V_{10,1}$ dominates $\M_9$.  But, since $V_{10,0}$ is mapped to a divisor of $\M_{10}$ by $c_{10}:=c_{10,0}$, it follows that the nodal curves in $V_{10,1}$ only fill a divisor on the boundary 
 $\partial\overline{\M}_{10}$ of  $\overline{\M}_{10}$, despite the fact that their normalizations  are  general curves of genus $9$.  This means that on a general curve $C$ of genus $9$, the effective divisors $P+Q$, with $P \ne Q$, such that the nodal curve $X=C/(P=Q)$ can be embedded in a K3 surface,  belong to a 1-dimensional cycle $\Gamma \subset C^{(2)}$.  It would be interesting to compute the numerical class of $\Gamma$.   
 
 The paper consists of  5 sections including the introduction. After recalling the relevant deformation theory in 
 \S\;2, we survey the known results about moduli of smooth curves on marked K3 surfaces in \S\;3. In \S\;4 we develop our approach for the case of nodal curves, and in \S\;5 we discuss the existence of nodal curves  having normalizations with general moduli. In the end we raise some related open questions.

\vspace{0,5cm}

{\it Acknowledgements.} We warmly thank B.~Fantechi and M.~Roth for useful conversations and C. Voisin for her comments.

%%%%%%%%%%%%%%%%%%%%%%SECTION 1: DEFTHEORY%%%%%%%%%%%%%%%
%
\section{Some basic results of deformation theory}\label{S:deftheory}
%
%%%%%%%%%%%%%%%%%%%%%%%%%%%%%%%%%%%%%%%%%%%%%%%%%%%

In this section we will review some results on deformation theory
that are needed for our aims. For complete details, we refer the
reader to e.g. \cite[\S\;3.4.4]{Ser}.

Let $Y$ be a smooth variety and let $j: X \hookrightarrow Y$ be a closed embedding of a Cartier divisor $X$.
The  {\em locally trivial deformations} of $j$ are studied by means of suitable sheaves on $Y$.

Let $\N_{X/Y}$ be the normal 
sheaf of $X$ in $Y$, and $\N'_{X/Y} \subseteq \N_{X/Y}$ the {\em equisingular normal sheaf} of $X$ in $Y$
(cf.  \cite[Proposition 1.1.9]{Ser}).
One can define  a coherent sheaf $\T_Y \langle X\rangle$ of rank $\dim(Y)$ on 
$Y$  via the exact sequence :
\begin{equation}\label{eq:txy}
0 \longrightarrow \T_Y \langle X \rangle \longrightarrow \T_Y {\longrightarrow} \N'_{X/Y} \longrightarrow 0,
\end{equation}which is called the  {\em sheaf of germs of tangent vectors to $Y$
that are tangent to $X$} (cf. \cite[\S\;3.4.4]{Ser}). Of course, when $X$ is smooth, then $\N'_{X/Y}$ in \eqref{eq:txy} is nothing but the normal bundle $\N_{X/Y}$.

One has  a natural surjective restriction map
\begin{equation}\label{eq:r}
r : \T_Y \langle X \rangle  \longrightarrow \T_X,
\end{equation}giving the exact sequence
\begin{equation}\label{eq:exseqr}
0 \longrightarrow  \T_Y(-X) \longrightarrow   \T_Y \langle X \rangle \longrightarrow \T_X \longrightarrow 0,
\end{equation}where $\T_Y(-X)$ is the vector bundle of tangent vectors of $Y$
vanishing along $X$ and where $\T_X$ is the {\em tangent sheaf} of
$X$, i.e. the dual sheaf of the sheaf of K\"ahler differentials of
$X$  (cf. \cite[\S\;3.4.4]{Ser}).

Observe that, when $X$ is a divisor  with simple normal crossings (see \cite{Ii}), 
  $\T_Y \langle X \rangle$ 
is a locally free subsheaf of the holomorphic tangent bundle $\T_{Y},$ whose restriction to $X$ is $\T_{X}$ and whose
localization at any point $x\in X$ is given by
\begin{equation*}
{\T}_{Y,x}=\underset{i=1}{\overset{l}{\sum }}\mathcal{O}_{Y,x}\cdot z_{i}\frac{\partial }{\partial z_{i}}+\underset{j=1+1}{\overset{n}{\sum }}\mathcal{O}_{Y,x}\cdot \frac{\partial }{\partial z_{j}},
\end{equation*}
where the local coordinates $z_1,z_2,\ldots ,z_{n}$ around $x$ are
chosen in such a way that $X=\{ z_{1}z_2 \cdots z_{l}=0\}$. In fact, $\T_Y \langle X \rangle = 
(\Omega_Y^1(\log X))^{\vee}$, where 
$\Omega_Y^1(\log X)$ denotes the sheaf of meromorphic $1$-forms on $Y$ that 
have at most logarithmic poles along $X$.  

Also, when $X$ is an integral curve sitting on a smooth surface $Y$, by (\ref{eq:txy}) the sheaf 
$\T_Y \langle X \rangle$ is an elementary transformation of the locally free sheaf $\T_Y$, and then it is locally free (see e.g. \cite[Lemma 2.2]{gom}).

Recall the following basic result:

\begin{proposition}\label{prop:beau} (see \cite[Proposition 3.4.17]{Ser})
The locally trivial deformations of the pair $(Y,X)$
(equivalently of the closed embedding $j$) are controlled by the
sheaf $\T_Y \langle X \rangle$; namely,
\begin{itemize}
\item the obstructions lie in $H^2(Y,\T_Y \langle X \rangle)$;
\item first-order, locally trivial deformations are parametrized by $H^1(Y,\T_Y \langle X \rangle)$;
\item infinitesimal automorphisms are parametrized by $H^0(Y,\T_Y \langle X \rangle)$.
\end{itemize}

The map that associates to a first-order, locally trivial
deformation of $(Y,X)$ the corresponding first-order deformation of $X$ is the
map
\begin{equation}\label{eq:h1r}
H^1(r) : H^1(Y,\T_Y \langle X \rangle) \longrightarrow H^1(X,\T_X),
\end{equation}induced in cohomology by \eqref{eq:r}.
\end{proposition}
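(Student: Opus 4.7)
The plan is to follow the standard \v Cech-theoretic deformation argument, adapted to the embedded setting. Fix an affine open cover $\mathcal{U}=\{U_i\}$ of $Y$ so fine that on each $U_i$ the embedded pair $(U_i, X\cap U_i)$ admits only trivial deformations (which exists since affine pairs are rigid in the locally trivial sense). A locally trivial first-order deformation of the pair $(Y,X)$ over $A=\CC[\eps]/(\eps^2)$ is then determined by a collection of $A$-automorphisms $\{\theta_{ij}\}$ of $U_{ij}\times \mathrm{Spec}\,A$ that preserve the trivial deformation of $X\cap U_{ij}$ and satisfy the cocycle condition on triple overlaps $U_{ijk}$, modulo automorphisms coming from single pieces.

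The key identification is that an infinitesimal $A$-automorphism of $(U_i, X\cap U_i)$ extending the identity is, to first order, a derivation of $\cO_{U_i}$ that carries $\mathcal I_{X\cap U_i}$ into itself; by \eqref{eq:txy} this is exactly a section of $\T_Y\langle X\rangle$ over $U_i$ (one reads from the definition of $\N'_{X/Y}$ that derivations preserving the equisingular ideal are precisely those whose image in $\T_Y$ maps to zero in $\N'_{X/Y}$). Therefore the cocycle $\{\theta_{ij}-\id\}/\eps$ is a \v Cech $1$-cocycle of $\T_Y\langle X\rangle$; coboundary changes correspond to relabeling by infinitesimal automorphisms from single $U_i$'s. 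This proves the identification of the space of first-order locally trivial deformations of $(Y,X)$ with $H^1(Y,\T_Y\langle X\rangle)$ and of infinitesimal automorphisms with $H^0(Y,\T_Y\langle X\rangle)$.

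For obstructions, given a deformation over a small extension $A'\to A$ with square-zero kernel $J$, one lifts each $\theta_{ij}$ locally (which is possible by formal smoothness of the local problem) to $\tilde\theta_{ij}$; the failure of $\{\tilde\theta_{ij}\}$ to satisfy the cocycle condition on $U_{ijk}$ defines an element $\tilde\theta_{jk}\tilde\theta_{ij}\tilde\theta_{ik}^{-1}-\id \in J\otimes \Gamma(U_{ijk}, \T_Y\langle X\rangle)$, which one checks is a \v Cech $2$-cocycle whose class in $H^2(Y,\T_Y\langle X\rangle)\otimes J$ vanishes if and only if the deformation lifts. Standard bookkeeping shows independence of this class from the choice of lifts.

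Finally, the comparison map in \eqref{eq:h1r} is built from functoriality: a locally trivial deformation of $(Y,X)$ restricts to a deformation of $X$, and at the level of \v Cech cocycles this is simply the componentwise application of the restriction $r\colon\T_Y\langle X\rangle\to \T_X$, yielding $H^1(r)$. The main technical point to be careful about is the identification of derivations ``preserving $X$ equisingularly'' with sections of $\T_Y\langle X\rangle$; once this is in hand, the rest is the classical \v Cech argument and we may defer to \cite[Proposition 3.4.17]{Ser} for the full verification.
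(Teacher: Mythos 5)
Your argument is correct, and it is essentially the standard \v Cech-cocycle proof of the cited result: the paper itself gives no proof here, deferring entirely to \cite[Proposition 3.4.17]{Ser}, whose proof proceeds exactly along the lines you sketch. The one point worth stating explicitly is your ``key identification'': since $X$ is a Cartier divisor in the smooth variety $Y$, the equisingular normal sheaf $\N'_{X/Y}$ is the image of the natural map $\T_Y \to \N_{X/Y}$, so the kernel of $\T_Y \to \N'_{X/Y}$ in \eqref{eq:txy} coincides with the kernel of $\T_Y \to \N_{X/Y}$, i.e.\ with the derivations carrying $\Ii_{X}$ into itself --- which is what makes your local description of infinitesimal automorphisms of the pair agree with $\T_Y\langle X\rangle$.
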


In the rest of the paper we will focus on the case
of nodal curves on a surface. 

%%%%%%%%%%%%%%%%%%%%%%SECTION 2: MUKAI E BEAUVILLE LISCE%%%%%%%%%%%%%%%
%
\section{Mukai's results on smooth, canonical curves on general, marked $K3$ surfaces and
Beauville's infinitesimal approach}\label{S:MukBea}
%
%%%%%%%%%%%%%%%%%%%%%%%%%%%%%%%%%%%%%%%%%%%%%%%%%%%

In this section, we shall briefly recall some results of Mukai \cite{M1,M2}  
and the infinitesimal approach considered by Beauville \cite[\S\;5]{B}.

Let $p \geq 2$ be an integer. 
Let $\B_p$ be the moduli stack of  smooth $K3$ surfaces marked by a globally generated, primitive line bundle of sectional genus $p$. That is, the elements of $\B_p$ are pairs $(S,H)$ where $S$ is a smooth $K3$ surface and $H$ is a globally generated line bundle on $S$ with $H^2=2p-2$ and such that $H$ is nondivisible in $\Pic (S)$.  It is well-known that $\B_p$ is smooth, irreducible and of dimension $19$ (cf. e.g.
\cite[Thm.VIII 7.3 and p.~366]{BPV} for the scheme structure; the same conclusions hold 
also for the stack structure of $\B_p$).

\begin{definition}\label{def:stack} Let $\KC_p$ be the algebraic stack of pairs $(S,C)$,  
where $(S,H) \in \B_p$, $p \geq 2$, and $C \in |H|$ is a smooth irreducible 
curve.   
\end{definition}

\noindent
Observe that there is an induced, surjective morphism of stacks
\begin{equation} \label{eq:pi}
\pi : \KC_p \hpil \B_p
\end{equation}given by the natural projection. 
From  \cite[\S\;(5.2)]{B}, for any $(S,C)
\in \KC_p$,  by Serre duality one has

\begin{equation}\label{eq:bea1}
H^2(S, \T_S \langle C
\rangle) = H^0(S, \Omega_S^1(\log C))^{\vee} = (0).
\end{equation}

Furthermore, since $C$ is a smooth curve of genus 
$p \geq 2$ and since $\T_S \cong \Omega_S^1$, being $S$ a $K3$ surface and $\T_S$ a rank-two vector bundle 
on it,   by \eqref{eq:exseqr} we have $$H^0(S, \T_S \langle C \rangle) = (0).$$In particular, 
from Proposition \ref{prop:beau}, $\KC_p$ is a smooth stack  of dimension 
$${\rm dim} (\KC_p) = h^1(S, \T_S \langle C \rangle) = 19 + p.$$ 
Since the fibers of $\pi$ are connected,  
$\KC_p$ is also irreducible.

Let $\M_p$ be the moduli stack of smooth curves of genus $p$, which is irreducible and of dimension
$3p-3$, since $p \geq 2$ by assumption. One has a natural morphism of stacks
\begin{equation}\label{eq:cp}
c_p : \KC_p \hpil \M_p
\end{equation}defined as $$c_p((S,C)) = [C] \in \M_p,$$where $[C]$ denotes
the isomorphism class of $C \subset S$.

Observe that:
\begin{eqnarray}\label{eq:dimp}
\begin{array}{ccl}
 {\rm dim}(\KC_p) > {\rm dim} (\M_p), & \; {\rm for} \; & p \leq 10, \\
 {\rm dim}(\KC_p) = {\rm dim} (\M_p), &  \; {\rm for} \; & p = 11, \\
 {\rm dim}(\KC_p) < {\rm dim} (\M_p), & \; {\rm for} \; & p \geq 12.
\end{array}
\end{eqnarray}

If we denote by $F_p$ the general fibre of $c_p$, then the {\em expected dimension} 
of $F_p$ is:

\begin{equation}\label{eq:expdim}
{\rm expdim} (F_p) = \Big\{ \begin{array}{ccl}
22 - 2 p & \; {\rm for} \; & p \leq 10, \\
 0 & \; {\rm for} \; & p \geq 11.
\end{array}
\end{equation}

The main results concerning the morphism $c_p$ are contained in the following:

\begin{theorem}[Mukai]\label{thm:mukai} With notation as above:
\begin{itemize}
\item[(i)] $c_p$ is dominant for $p \leq 9$ and $p = 11$ (cf. \cite{M1});
\item[(ii)] $c_p$ is not dominant for $p = 10$ (cf. \cite{M1}). More precisely, its image is a
hypersurface in $\M_{10}$ (cf. \cite{CU});
\item[(iii)] $c_p$ is generically finite onto its image, for $p = 11$ and for $p \geq 13$,
but not for $p = 12$ (cf. \cite{M2}).
\end{itemize}
\end{theorem}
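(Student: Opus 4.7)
The plan is to follow Mukai's original approach in \cite{M1,M2}, treating the three parts separately since they rely on different constructions. The common input is the irreducibility and smoothness of $\KC_p$ of dimension $19+p$ already recorded above.

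For the dominance parts of (i), the unifying technique is to realize a general marked $K3$ surface $(S,H)\in \B_p$ as a transverse linear section of a distinguished projective variety $\Sigma_p$ of dimension $p+2$, typically a rational homogeneous space (a Grassmannian, an orthogonal Grassmannian, a Lagrangian Grassmannian, etc.) or a linear section of one. A smooth $C\in |H|$ on such $S$ is then a further hyperplane section of $\Sigma_p$. For each $p\in\{3,\dots,9\}$ one invokes the classification of Mukai varieties and checks that the parameter space of codimension-$p$ linear sections, modulo $\Aut(\Sigma_p)$, has dimension at least $3p-3=\dim\M_p$; surjectivity of the induced classifying map to $\M_p$ (hence dominance of $c_p$) then follows from a Zariski tangent/parameter count together with the irreducibility of $\KC_p$. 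For $p=11$ the classical projective description is no longer available, and one instead uses Mukai's construction realizing a general genus-$11$ $K3$ as a Brill--Noether locus inside a moduli space of semistable rank-$2$ bundles on a general genus-$11$ curve, from which dominance is read off.

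For (ii), the Mukai variety $\Sigma_{10}$ is the $5$-dimensional Fano variety of $G_2$-type; it has Picard rank one and every $K3$ of genus $10$ obtained as a codimension-$3$ linear section inherits a canonical rigid rank-$3$ Mukai bundle $\E$ with $\det\E=H$ and $h^0(\E)=7$. Restricting $\E$ to a smooth $C\in |H|$ produces a rank-$3$ stable bundle on $C$ with prescribed invariants, a codimension-$1$ Brill--Noether condition on $[C]\in\M_{10}$; combined with the equality $\dim\KC_{10}=29$ and $\dim\M_{10}=27$, this forces the image of $c_{10}$ to be contained in a hypersurface, and one verifies equality by showing the relevant restriction-and-classification map is surjective onto that locus, as refined in \cite{CU}.

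For (iii), generic finiteness is handled in three separate regimes. When $p=11$ dominance from (i) together with $\dim \KC_{11}=30=\dim \M_{11}$ yields generic finiteness immediately. For $p\ge 13$, where $\dim\KC_p<\dim\M_p$, one must prove that a general $C\in c_p(\KC_p)$ lies on only finitely many $K3$'s; Mukai's argument reconstructs $S$ from $C$ up to finite ambiguity via an appropriate moduli space of vector bundles on $C$ (projective rigidity). For $p=12$ this reconstruction fails in a controlled way: Mukai exhibits, via the geometry of $W^1_7(C)$ on a general $C\in c_{12}(\KC_{12})$, a positive-dimensional family of non-isomorphic $K3$ embeddings of $C$, so generic fibers of $c_{12}$ are positive-dimensional. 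I expect the main obstacles to be dominance for $p=11$ and the failure of generic finiteness for $p=12$, both of which rely essentially on vector-bundle technology on $K3$ surfaces rather than on the classical projective-geometric descriptions that suffice for $p\le 9$ and for $p=10$.
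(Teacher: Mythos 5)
The paper does not actually prove Theorem \ref{thm:mukai}: it is imported from Mukai \cite{M1,M2} and Cukierman--Ulmer \cite{CU}, and what Section 3 supplies instead is Beauville's infinitesimal reformulation (Proposition \ref{prop:beaumuk}), namely that smoothness, resp.\ unramifiedness, of $c_p$ at $(S,C)$ is equivalent to the vanishing of $h^0(S,\Omega^1_S(H))$, resp.\ $h^1(S,\Omega^1_S(H))$; it is this cohomological form that the paper actually uses later, in the proof of Theorem \ref{thm:main}. Your sketch of Mukai's original geometric arguments is broadly the right roadmap, but it has one genuine logical gap in part (i): dominance for $p\le 9$ cannot ``follow from a Zariski tangent/parameter count together with the irreducibility of $\KC_p$.'' The inequality $\dim\KC_p\ge\dim\M_p$ holds just as well for $p=10$ (where $29>27$), and yet $c_{10}$ is \emph{not} dominant --- this is precisely the phenomenon flagged in the introduction (``despite an obvious count of constants indicating the opposite''). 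A dimension count bounds the image from above, never from below; to get dominance one must either exhibit a point where the differential $H^1(r)$ is surjective (equivalently, by Proposition \ref{prop:beaumuk}, a marked $K3$ with $h^0(\Omega^1_S(H))=0$, which is how Beauville reproves the result in \cite{B}), or run Mukai's actual construction producing a $K3$ through a general curve. As written, your argument for (i) would equally ``prove'' the false statement that $c_{10}$ is dominant.

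Beyond that, every substantive step (classification of the Mukai varieties $\Sigma_p$, the reconstruction theorems for $p\ge 13$, the genus-$12$ failure) is still deferred to the very papers being cited, so the proposal is an annotated citation rather than a proof --- which, to be fair, matches the level of detail the paper itself offers. Two factual details should nevertheless be corrected before being relied on: $\Sigma_p$ does not have dimension $p+2$ in general (for $p=7$ it is the $10$-dimensional orthogonal Grassmannian, for $p=8$ the $8$-dimensional $G(2,6)$, for $p=9$ the $6$-dimensional Lagrangian Grassmannian, and for $p\le 6$ the $K3$ is a complete intersection or a quadric section rather than a purely linear section); and the Mukai bundle on a genus-$10$ $K3$ cut out of the $5$-dimensional $G_2$-variety in $G(2,7)$ has rank $2$, not $3$: rigidity forces $rs=(H^2+2)/2=10$ with $h^0=r+s=7$, i.e.\ $(r,s)=(2,5)$.
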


\vspace{0,4cm}

\begin{remark}\label{rem:expdim} {\normalfont
(1) In particular, from \eqref{eq:expdim} and from Theorem \ref{thm:mukai}, one has
$${\rm dim} (F_p) = {\rm expdim} (F_p),$$
unless either 
\begin{itemize}
\item $p=10$, in which case ${\rm dim} (F_p) = {\rm expdim} (F_p) + 1 = 3$, or
\item $p=12$, in which case $ {\rm dim} (F_p) \geq 1$. 
\end{itemize}

\noindent
(2) When the map $c_p$ is not dominant, one can look at it as a way to produce 
hopefully interesting cycles in the moduli space of curves. 
The case $p=10$ is particularly relevant, as the 
divisor in $\M_{10}$ parametrizing curves lying on a $K3$ surface was the first counterexample to the 
slope conjecture (see \cite{FP}).}
\end{remark}

\vspace{0,4cm}

In \cite[\S\;(5.2)]{B} Beauville considered the morphism $c_p$
from a differential point of view. Let $(S,C) \in \KC_p$ be any point. From Proposition \ref{prop:beau}, 
the differential of $c_p$ at the point $(S,C)$ can be identified with the map
$$H^1(r) : H^1(\T_S \langle C \rangle) \longrightarrow H^1(\T_C),$$as 
in \eqref{eq:h1r}. From \eqref{eq:exseqr} and \eqref{eq:bea1} it follows that $H^1(r)$ fits in the exact sequence:
$$0 \longrightarrow H^1(S,\T_S(-C)) \longrightarrow H^1(S,\T_S \langle C \rangle) \stackrel{H^1(r)}{\longrightarrow} 
H^1(C,\T_C) \longrightarrow H^2(S,\T_S(-C)) \longrightarrow 0.$$

Using Serre duality and the fact that $\omega_S$ is trivial, we get
\begin{equation}\label{eq:flam}
H^j(S,\T_S(-C)) \cong H^{2-j} (S, \Omega^1_S (C))^{\vee}, \;\;\; 0 \leq j \leq 2.
\end{equation}

From \eqref{eq:flam} we obtain that the morphism $c_p$ is:
\begin{itemize}
\item {\em smooth} at $(S, C) \in \KC_p$ (i.e. the differential $(c_{p})_*$ at the
point $(S,C)$ is surjective) if and only if $H^0(S,\Omega^1_S(C)) = (0)$;
\item {\em unramified} at $(S, C)\in \KC_p$ (i.e. the differential $(c_{p})_*$ at the
point $(S,C)$ is injective) if and only if $H^1(S, \Omega_S^1(C)) = (0)$.
\end{itemize}Note that the above conditions depend only on the 
 marking  
$H = \Oc_S(C)$ and not on the particular curve $C$ in $|H|$.

With this approach, Theorem \ref{thm:mukai} is equivalent to:

\begin{proposition}\label{prop:beaumuk}{\em(see \cite[\S\;(5.2)]{B}).}
Let $(S, H)$ be a general  primitively polarized $K3$ surface of 
genus $p\geq 2$.   We have:
\begin{itemize}
\item[(i)] $h^0(S,\Omega_S^1 (H)) = 0$, for $p \leq 9$ and $p = 11$;
\item[(ii)] $h^0(S,\Omega_S^1 (H)) = 1$, for $p = 10$;
\item[(iii)] $h^1(S,\Omega_S^1 ( H) ) = 0$, for $p = 11$ and $p \geq 13$;
\item[(iv)] $h^1(S,\Omega_S^1 ( H)) \geq 1$, for $p = 12$.
\end{itemize}
\end{proposition}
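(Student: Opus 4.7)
The plan is to read each of (i)--(iv) directly off the dimension of the image and of the general fibre of $c_p$, exploiting the infinitesimal translation already prepared in the paragraph preceding the proposition. Namely, from the four-term exact sequence
$$0 \longrightarrow H^1(S, \T_S(-C)) \longrightarrow H^1(S, \T_S\langle C\rangle) \stackrel{H^1(r)}{\longrightarrow} H^1(C, \T_C) \longrightarrow H^2(S, \T_S(-C)) \longrightarrow 0$$
together with the Serre duality identifications \eqref{eq:flam}, the differential $(c_p)_*$ at a point $(S,C)\in\KC_p$ satisfies
$$\dim \ker (c_p)_* = h^1(S, \Omega_S^1(H)), \qquad \dim \operatorname{coker} (c_p)_* = h^0(S, \Omega_S^1(H)),$$
and both integers are independent of the curve $C\in |H|$ chosen in the fibre of $\pi:\KC_p\to \B_p$.

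For part (i), Theorem \ref{thm:mukai}(i) says that $c_p$ is dominant for $p\le 9$ and $p=11$. Since $\KC_p$ and $\M_p$ are smooth and we work in characteristic zero, generic smoothness produces a non-empty open $U\subseteq \KC_p$ over which $(c_p)_*$ is surjective; by the displayed equality the cokernel vanishes there, so $h^0(S,\Omega_S^1(H))=0$ on $U$. Because this integer depends only on the marking and $\pi$ is surjective with connected fibres, the vanishing spreads to a non-empty open subset of $\B_p$, which proves (i).

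For parts (ii)--(iv), I would instead apply generic smoothness to the dominant morphism $c_p:\KC_p\to Z$, where $Z:=\overline{\im(c_p)}\subseteq \M_p$ is the image with its reduced structure: at a general $(S,C)$ the differential $(c_p)_*$ surjects onto $T_{c_p(S,C)} Z$, so that
$$\dim \operatorname{coker} (c_p)_* = \dim \M_p - \dim Z, \qquad \dim \ker (c_p)_* = \dim \KC_p - \dim Z.$$
Now I plug in Mukai's input from Theorem \ref{thm:mukai}. For $p=10$ the image is a hypersurface, i.e.\ $\dim Z=\dim \M_{10}-1$, and the first equality gives $h^0=1$, proving (ii). For $p=11$ and $p\ge 13$ the morphism is generically finite onto its image, so $\dim Z=\dim \KC_p$, and the second equality gives $h^1=0$, proving (iii). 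For $p=12$, $c_{12}$ fails to be generically finite, whence $\dim Z<\dim \KC_{12}$ and the general kernel is positive-dimensional, giving $h^1\ge 1$ and proving (iv).

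The argument is essentially a generic-smoothness bookkeeping on top of Mukai's Theorem \ref{thm:mukai}, so I do not expect a genuine technical obstacle. The one point to be careful about is the transfer from a generic statement on $\KC_p$ to a generic statement on $\B_p$: this is precisely where the observation that $h^i(\Omega_S^1(H))$ depends only on the marking, together with the surjectivity and connectedness of the fibres of $\pi$, is decisive.
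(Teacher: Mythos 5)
Your argument is correct and is essentially the route the paper itself takes: it states Proposition~\ref{prop:beaumuk} precisely as the infinitesimal reformulation of Theorem~\ref{thm:mukai} via the identification of the differential of $c_p$ with $H^1(r)$, the Serre-duality identifications \eqref{eq:flam}, and the observation that $h^i(\Omega_S^1(H))$ depends only on the marking. Your generic-smoothness bookkeeping (including the transfer from a general point of $\KC_p$ to a general $(S,H)\in\B_p$ using the surjectivity and connectedness of the fibres of $\pi$) just makes that asserted equivalence explicit in the direction needed, so there is nothing to object to.
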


\begin{rmk} \label{rem:accazero}
{\rm Since $h^2(\Omega_S^1 (H)) = h^0(\T_S(-H))=0$, $c_1(\Omega_S^1 (H))=2H$ and
$c_2(\Omega_S^1 (H))=H^2+24=2p+22$, we have, by Riemann-Roch
\begin{eqnarray*}
h^0(\Omega_S^1 (H)) &  =   & \chi(\Omega_S^1 (H)) + h^1(\Omega_S^1 (H)) \\
                    & = & \frac{c_1(\Omega_S^1 (H))^2}{2} -  c_2(\Omega_S^1 (H)) + 2 \rk (\Omega_S^1 (H)) + h^1(\Omega_S^1 (H))\\
&  =   & 2p-22 + h^1(\Omega_S^1 (H)) \geq 2p-22.
\end{eqnarray*}
In particular, $h^0(\Omega_S^1 (H)) \geq 3$ if $p \geq 12$ (cf. also Question \ref{quest:pd}).}
\end{rmk}

%%%%%%%%%%%%%%%%%%%%%%SECTION 4: APPROCCIO ALLE NODALI%%%%%%%%%%%%%%%
%
\section{The approach to the nodal case}\label{S:Tessier}
%
%%%%%%%%%%%%%%%%%%%%%%%%%%%%%%%%%%%%%%%%%%%%%%%%%%%

By using Proposition \ref{prop:beau} and a similar approach as in
\S\;\ref{S:MukBea}, we want to deduce some extensions of Theorem
\ref{thm:mukai} to irreducible, nodal curves in the primitive
linear system $|H|$ on a general  primitively polarized $K3$ surface of 
genus $p \geq 3$.  In particular, we are interested in determining when the normalization of such a singular curve is 
an (abstract) smooth curve with general moduli. 

To do this, we have to fix some notation and to prove some 
results that will be used in what follows. First we recall that, for any smooth surface $S$ and any line
bundle $H$ on $S$, such that $|H|$ contains smooth, irreducible
curves of genus $p:= p_a(H)$, and any positive integer $\delta \leq p$, one
denotes by$$V_{|H|, \delta}(S) \; \; \mbox{or simply} \; V_{|H|, \delta}$$the locally closed and
functorially defined subscheme of $|H|$ parametrizing the
universal family of irreducible curves in $|H|$ having
$\delta$ nodes as the only singularities and, consequently, geometric genus $g:= p- \delta$. These are classically called {\em Severi varieties} of irreducible, $\delta$-nodal curves on $S$ in $|H|$.

It is well-known, as a direct consequence of Mumford's
theorem on the existence of nodal rational curves on $K3$ surfaces
(see e.g. \cite[pp.~365-367]{BPV}) and standard results on Severi
varieties (see e.g. \cite{Tan,CS,F}),  that if  $(S,H) \in \B_p$ is 
general, $p \geq 2$,  
then 
\begin{equation}\label{eq:reg}
V_{|H|, \delta} \textrm { is nonempty and 
regular,} 
\end{equation}
i.e. it is smooth and (each of its irreducible components is) 
of the expected dimension $ g = p-\delta$, for each $\delta \leq p$. (In fact, the regularity holds whenever
$V_{|H|, \delta}$ is nonempty.)

From now on, we shall always consider 
\begin{equation}\label{eq:boundpdelta}
p \geq 3\;\; {\rm and} \;\;\; 0 \leq \delta \leq p-2, \;\; {\rm so \; that} \;\; g \geq 2. 
\end{equation}
Similarly as in Definition \ref{def:stack}, we have: 
\begin{definition}\label{def:stackb} For any  $p$ and $\delta$ as in \eqref{eq:boundpdelta}, let 
$\mathcal{V}_{p,\delta} $ be the stack of pairs $(S,X)$, such that 
 $(S,H) \in \B_p$ and $[X] \in V_{|H|,\delta}(S)$.  
\end{definition}Of course $\mathcal{V}_{p,0} = \KC_p$ as in Definition \ref{def:stack}.  

For any fixed $p$ and any $\delta$ as in \eqref{eq:boundpdelta}, 
the stacks $\mathcal{V}_{p,\delta} $ are locally closed substacks of a natural 
enlargement $\overline{\KC}_p$ of $\KC_p$, which is defined as the stack of pairs $(S,C)$, where  $(S,H) \in \B_p$ and $C \in |H|$. 
It follows that the stacks $\mathcal{V}_{p,\delta} $ are algebraic because  $\overline{\KC}_p$ is.

Consider $\B_p^0 \subset \B_p$ the open dense substack parametrizing elements $(S,H)$ in $\B_p$ that verify 
\eqref{eq:reg}.

For any $\delta$ as above, let
\begin{equation} \label{eq:pib}
\pi_{\delta}: \mathcal{V}_{p,\delta} \hpil \B_p^0
\end{equation}
be the surjective morphism given by the projection.

Let $(S,X) \in \mathcal{V}_{p,\delta}$. From \eqref{eq:txy}, we can consider the exact sequence
\begin{equation}\label{eq:tsx}
0 \longrightarrow  \T_S \langle X\rangle \longrightarrow   \T_S \longrightarrow \N'_{X/S} \longrightarrow 0.
\end{equation}Since $S$ is a $K3$ surface  
$$H^2(\T_S) \cong H^{0}(\Omega^1_S) \cong H^{0}(\T_S)= (0).
$$
Therefore, passing to cohomology in \eqref{eq:tsx}, we get
\begin{equation}\label{eq:tsx1}
0 \longrightarrow H^0(\N'_{X/S}) \longrightarrow H^1(\T_S \langle X\rangle) \stackrel{H^1(r)}{\longrightarrow} H^1(\T_S)
\stackrel{\alpha}{\longrightarrow} H^1(\N'_{X/S}) \longrightarrow H^2(\T_S \langle
X\rangle) \longrightarrow 0,
\end{equation}where $H^1(r)$ is as in \eqref{eq:h1r}. 

From \cite[Lemma 2.4 and Theorem 2.6]{Tan}, we have
\begin{equation}\label{eq:tan}
H^1(\N'_{X/S}) \cong \CC, \; H^0(\N'_{X/S}) \cong T_{[X]}(V_{|H|, \delta}(S)).
\end{equation}Moreover, recall that $V_{|H|, \delta}(S)$ is regular at $[X]$ (i.e. it is 
smooth at $[X]$ and of the expected dimension $g$). This means that, despite the fact that
$h^1(\N'_{X/S}) =1$, the infinitesimal, locally trivial
deformations of the closed embedding $X
\stackrel{j}{\hookrightarrow} S$, with $S$ fixed, are unobstructed 
and the nodes impose independent conditions to
locally trivial deformations (cf. \cite[Remark 2.7]{Tan}).

We want to show that, for $\delta >0$, the properties of the stack $\mathcal{V}_{p,\delta}$ are similar 
to those of the stack $\KC_p$. 

\begin{proposition}\label{prop:smst} Let 
$p$, $\delta$ and $g$ be positive integers as in \eqref{eq:boundpdelta}. 
 
Then, for any $(S,X) \in \mathcal{V}_{p,\delta}$, we have
\begin{equation}\label{eq:palle}
h^0(\T_S \langle X \rangle) = 0 , \;\;\; h^2(\T_S \langle X \rangle) = 0.
\end{equation}

In particular, 
\begin{itemize}
\item[(i)] the stack $\mathcal{V}_{p,\delta}$ is smooth. 
\item[(ii)] Any irreducible component $\mathcal{V} \subseteq \mathcal{V}_{p,\delta}$  has 
dimension $h^1(\T_S \langle X \rangle) =19+g$.
\item[(iii)] The morphism $\pi_{\delta}$ is smooth and any irreducible component $\mathcal{V} \subseteq \mathcal{V}_{p,\delta}$ smoothly dominates $\B_p^0$. 
\end{itemize}
\end{proposition}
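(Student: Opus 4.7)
My plan is to first establish the two vanishings in \eqref{eq:palle}, and then to derive parts (i)--(iii) as formal consequences of Proposition~\ref{prop:beau}, the six-term sequence \eqref{eq:tsx1}, the identifications \eqref{eq:tan}, and a standard smoothness criterion.

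The first vanishing $h^0(\T_S\langle X\rangle)=0$ is immediate: the exact sequence \eqref{eq:tsx} realizes $\T_S\langle X\rangle$ as a subsheaf of $\T_S$, and $H^0(\T_S)=0$ on a $K3$ surface. For the second vanishing $h^2(\T_S\langle X\rangle)=0$, I exploit that $X$, having only nodes, is a (local) normal crossings divisor on $S$, so by the discussion in \S~\ref{S:deftheory} one has $\T_S\langle X\rangle \simeq (\Omega^1_S(\log X))^{\vee}$, locally free of rank two. Serre duality on $S$ (with $\omega_S\simeq\Oc_S$) then gives $h^2(\T_S\langle X\rangle)=h^0(\Omega^1_S(\log X))$. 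I would apply the residue exact sequence
$$0 \longrightarrow \Omega^1_S \longrightarrow \Omega^1_S(\log X) \longrightarrow \nu_*\Oc_{\widetilde{X}} \longrightarrow 0,$$
where $\nu:\widetilde{X}\to X$ is the normalization. Since $H^0(\Omega^1_S)=0$ and $h^0(\Oc_{\widetilde{X}})=1$ (as $X$, and hence $\widetilde{X}$, is irreducible), the vanishing of $h^0(\Omega^1_S(\log X))$ reduces to checking that the connecting homomorphism $\partial:\CC\to H^1(\Omega^1_S)$ is nonzero. By the standard description of the residue extension, $\partial(1)=c_1(\Oc_S(X))=c_1(H)\in H^{1,1}(S)$, which is nonzero because $H$ is a primitive, in particular nontrivial, polarization.

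Given \eqref{eq:palle}, part (i) follows immediately from Proposition~\ref{prop:beau}. For (ii), the alternating sum of dimensions in \eqref{eq:tsx1}, combined with $h^1(\T_S)=20$ and \eqref{eq:tan}, yields
$$h^1(\T_S\langle X\rangle)=h^0(\N'_{X/S})+h^1(\T_S)-h^1(\N'_{X/S})=g+20-1=19+g.$$
For (iii), by definition of $\B_p^0$ the fibers of $\pi_\delta$ are regular Severi varieties, hence smooth of constant dimension $g$, and $\dim\mathcal{V}_{p,\delta}=19+g=\dim\B_p^0+g$; therefore every irreducible component $\mathcal{V}\subseteq\mathcal{V}_{p,\delta}$ dominates $\B_p^0$ by a dimension count, and smoothness of $\pi_\delta$ follows from the standard criterion for a morphism of smooth stacks with smooth equidimensional smooth fibers. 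Equivalently, \eqref{eq:tsx1} together with \eqref{eq:palle} shows that the image of $H^1(r)$ has dimension $19$ and lies in the $19$-dimensional tangent space to $\B_p^0$ at $(S,\Oc_S(X))$, forcing the two to coincide.

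The main technical point is the second vanishing, and within it the identification $\partial(1)=c_1(H)$; once this is secured via the primitivity of $H$, the remaining conclusions are a formal unwinding of the deformation theory already recalled in the excerpt.
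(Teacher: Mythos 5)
Your proof is correct, but it reaches the crucial vanishing $h^2(\T_S\langle X\rangle)=0$ by a genuinely different route from the paper's. The paper stays inside the six-term sequence \eqref{eq:tsx1}: using $h^1(\N'_{X/S})=1$ from \eqref{eq:tan}, $h^1(\T_S)=20$, and the identification of $\ker\alpha$ with the $19$-dimensional space of first-order deformations of $S$ preserving the marking, it concludes that $\alpha$ is surjective, which by exactness forces $H^2(\T_S\langle X\rangle)=0$; parts (ii) and (iii) are then read off by matching \eqref{eq:tsx2} with the tangent-space sequence \eqref{eq:tsx3}. You instead extend Beauville's argument \eqref{eq:bea1} from the smooth to the nodal case: since a nodal curve is a (local) normal crossings divisor, $\T_S\langle X\rangle\cong(\Omega^1_S(\log X))^{\vee}$ is locally free, Serre duality gives $h^2(\T_S\langle X\rangle)=h^0(\Omega^1_S(\log X))$, and the residue sequence onto $\varphi_*\Oc_C$ (with $\varphi:C\to X$ the normalization) reduces everything to the nonvanishing of $\partial(1)$, which equals $c_1(\Oc_S(X))$ up to a nonzero constant and is nonzero because $H$ is a nontrivial effective class with $H^2=2p-2>0$ (primitivity is not needed). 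The two arguments are morally dual — the surjectivity of $\alpha$ and the injectivity of your $\partial$ both come down to the nontriviality of the class of $X$ in $H^1(\Omega^1_S)$ — but yours is more self-contained (it needs neither $h^1(\N'_{X/S})=1$ nor the identification of $\ker\alpha$ with $T_{[S]}\B_p$ for the vanishing itself), whereas the paper's route produces \eqref{eq:tsx3} as a by-product, which is precisely what it uses for (ii) and (iii). Your handling of (iii) by the dimension count plus miracle flatness (equidimensional fibers between smooth spaces give flatness, and smooth fibers then give smoothness of $\pi_\delta$) is also valid and somewhat more elementary than the paper's differential argument. Two small points to make explicit: the identification $\T_S\langle X\rangle\cong(\Omega^1_S(\log X))^{\vee}$ is stated in \S\;\ref{S:deftheory} for simple normal crossings, but it is a local statement and therefore applies verbatim to an irreducible nodal curve; and your computation $h^1(\T_S\langle X\rangle)=g+20-1$ uses $h^0(\N'_{X/S})=\dim T_{[X]}V_{|H|,\delta}(S)=g$, i.e.\ the regularity \eqref{eq:reg} of the Severi variety, exactly as the paper does.
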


\begin{proof} The first equality in \eqref{eq:palle} directly follows from \eqref{eq:tsx}.

For what concerns the second equality in \eqref{eq:palle}, we consider \eqref{eq:tsx1} above: 
for any 
point $(S,X) \in \mathcal{V}_{p,\delta}$ we have that
\begin{equation}\label{eq:tsx2}
0 \longrightarrow H^0(\N'_{X/S}) \longrightarrow H^1(\T_S \langle
X\rangle) \longrightarrow {\rm Ker} (\alpha) \longrightarrow 0
\end{equation}
can be read as the natural differential sequence
\begin{equation}\label{eq:tsx3}
0 \longrightarrow T_{[X]} (V_{|H|, \delta} (S)) \longrightarrow  T_{(S,X)}
(\mathcal{V}_{p,\delta})  \longrightarrow T_{[S]} (\B^0_p)\longrightarrow 0;
\end{equation}
indeed, $\B^0_p$ is smooth of dimension $19$, whereas $h^1(\T_S) = 20$ and $h^1(\N'_{X/S}) = 1$
by \eqref{eq:tan}, thus we have 
${\rm Im} (\alpha) = H^1(\N'_{X/S}) \cong H^1(\N_{X/S})$, i.e. the elements of ${\rm Ker} (\alpha)$ 
can be identified with the first-order deformations of $S$ preserving the genus $p$  marking.

Moreover, it follows that $H^2(\T_S \langle X\rangle) = (0)$,
i.e. the infinitesimal deformations of the closed embedding $X
\stackrel{j}{\hookrightarrow} S$, with $S$ not fixed,  are
unobstructed. 

Now, (i) directly follows from Proposition \ref{prop:beau}, whereas (ii) and (iii) follow from Proposition 
\ref{prop:beau}, \eqref{eq:tsx3} and from what recalled above on Severi varieties on general $K3$ surfaces.
\end{proof}

In particular, we have:

\begin{corollary}\label{cor:smst} There exists an open, dense substack ${\mathcal U}_p \subseteq \B_p^0$ such that 
the number of irreducible components of $V_{|H|,\delta}(S)$ is constant, for $(S,H)$ varying in ${\mathcal U}_p$. 
\end{corollary}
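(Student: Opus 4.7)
The plan is to decompose $\mathcal{V}_{p,\delta}$ into its irreducible components, track each one under $\pi_\delta$ using Proposition \ref{prop:smst}, and conclude with a standard constructibility argument on the irreducible base $\B_p^0$.

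Since $\mathcal{V}_{p,\delta}$ is a Noetherian algebraic stack of finite type, it has only finitely many irreducible components $\mathcal{V}^1,\ldots,\mathcal{V}^N$. The smoothness of $\mathcal{V}_{p,\delta}$ (Proposition \ref{prop:smst}(i)) forces these components to be pairwise disjoint, each one smooth, irreducible, and both open and closed in $\mathcal{V}_{p,\delta}$. Writing $\pi^i := \pi_\delta|_{\mathcal{V}^i}$, Proposition \ref{prop:smst}(iii) says that $\pi^i\colon\mathcal{V}^i\to \B_p^0$ is smooth and dominant; since smooth morphisms are open and $\B_p^0$ is irreducible, each $\pi^i$ is in fact surjective. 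For every $(S,H)\in \B_p^0$ we therefore obtain a disjoint decomposition
\[
V_{|H|,\delta}(S) \;=\; \pi_\delta^{-1}(S,H) \;=\; \coprod_{i=1}^N (\pi^i)^{-1}(S,H),
\]
in which each summand is nonempty, smooth and equidimensional of dimension $g$. It is therefore enough to show that, for each $i$, the number of irreducible components of $(\pi^i)^{-1}(S,H)$ is constant on a dense open substack of $\B_p^0$.

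To this end, let $n_i$ denote the (finite) number of irreducible components of the geometric generic fibre of $\pi^i$. A standard spreading-out argument then shows that the function $(S,H)\mapsto \#\{\text{irreducible components of }(\pi^i)^{-1}(S,H)\}$ is constructible on $\B_p^0$: one chooses an \'etale atlas of $\pi^i$, spreads out each of the finitely many generic components to a closed subscheme of the atlas, and uses Chevalley together with the generic flatness (in fact smoothness) of $\pi^i$. Since $\B_p^0$ is irreducible, this constructible function takes the value $n_i$ on a dense open substack $\mathcal{U}^i\subseteq \B_p^0$. Setting $\mathcal{U}_p := \bigcap_{i=1}^N \mathcal{U}^i$ then gives a dense open substack over which the number of irreducible components of $V_{|H|,\delta}(S)$ equals $\sum_{i=1}^N n_i$, independently of $(S,H)\in \mathcal{U}_p$. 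The constructibility step is the only non-formal one, and is the place I would take the most care in writing out the full proof; everything else is immediate from Proposition \ref{prop:smst} and the Noetherianity of $\mathcal{V}_{p,\delta}$.
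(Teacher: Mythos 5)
Your argument is correct, and it differs from the paper's at the decisive step. The paper's proof is one line: after noting (as you do) that smoothness makes the irreducible components $\mathcal{V}^i$ of $\mathcal{V}_{p,\delta}$ disjoint and that each maps smoothly and dominantly to $\B_p^0$ by Proposition \ref{prop:smst}(iii), it asserts that the general fibre of $\pi_\delta|_{\mathcal{V}^i}$ is \emph{irreducible} because $\mathcal{V}^i$ is, so that over a dense open substack the number of components of $V_{|H|,\delta}(S)$ is simply the number of components of $\mathcal{V}_{p,\delta}$. You instead avoid any irreducibility claim for the fibres and only prove what the corollary needs: the number of (geometric) irreducible components of the fibres of $\pi_\delta|_{\mathcal{V}^i}$ is a constructible function on $\B_p^0$, hence constant, equal to the number $n_i$ of components of the geometric generic fibre, on a dense open; intersecting the finitely many opens gives the constant value $\sum_i n_i$. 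Your route is the more cautious one, since "irreducible total space dominating an irreducible base" does not by itself force irreducible general fibres (an irreducible family can have monodromy permuting several fibre components, as for an \'etale double cover), whereas the constructibility statement is unconditional; the paper's stronger assertion, when it holds, buys more, namely that the general Severi variety has exactly as many components as $\mathcal{V}_{p,\delta}$, and in particular irreducibility of $V_{|H|,\delta}(S)$ for general $(S,H)$ whenever the corresponding $\mathcal{V}^i$ is the only component. One small inaccuracy in your write-up: smoothness of $\pi^i$ plus dominance gives an open dense image, not surjectivity, so the summands $(\pi^i)^{-1}(S,H)$ need only be nonempty for $(S,H)$ in that open dense locus; this is harmless, since you may simply include the images among the opens you intersect at the end.
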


\begin{proof} This directly follows from Proposition \ref{prop:smst} (iii) and from the fact that,  
$\mathcal{V}$ being irreducible, the general fibre of $ \pi_{\delta}|_{\mathcal{V}}$ has to be irreducible. 
\end{proof}

\begin{rmk}{\em Notice that it is not known
whether, for a general primitively polarized K3 surface $(S,H)$, the Severi varieties $V_{|k H|,\delta}(S)$, 
for any integer $k \geq 1$, are irreducible as soon as they have positive dimension. Interesting work on this topic has been done recently by Dedieu in \cite{D}.}
\end{rmk}

We need a reformulation of Proposition \ref{prop:beau} in this specific situation, that will be particularly useful for our aims.

First we have to recall some general facts. Let $ (S,X) \in \mathcal{V}_{p,\delta}$ be any pair as above.
Let$$\varphi : C \longrightarrow X \subset S$$ be the normalization morphism. By the fact that $X$ is a curve on $S$, 
one can consider the exact sequence:
\begin{equation}\label{eq:nfi}
0 \longrightarrow \T_C \longrightarrow \varphi^*(\T_S) \longrightarrow \N_{\varphi} \longrightarrow 0,
\end{equation}defining $\N_{\varphi}$ as the {\em normal sheaf} to  $\varphi$.
Since $X$ is a nodal curve, it is well-known that $\N_{\varphi}$ is a line bundle on $C$ 
(cf. e.g. \cite{Ser}).

\begin{remark}\label{rem:spezzamento}
{\em Denote as above by $\varphi : C \to X \subset S$
the normalization morphism of a curve sitting on a surface $S$. Suppose moreover that $S$ is a $K3$
surface. Then $\N_{\varphi}=\omega_C$. 
Therefore, unless $C$ is hyperelliptic, we always have the surjectivity of the map :
$$
 H^0(C, \N_{\varphi})\otimes H^0 (C, \omega_C^{\otimes 2})\longrightarrow H^0(C, \N_{\varphi}\otimes \omega_C^{\otimes 2}). 
$$
Arguing as in \cite[Proposition 1]{BM}, we get that 
the splitting of the exact sequence (\ref{eq:nfi}) is {\it equivalent} to the triviality, as abstract deformations of $C$, of the infinitesimal deformations parametrized by $H^0(C,\N_{\varphi})$, i.e. the coboundary map 
$H^0(C,\N_{\varphi})\longrightarrow H^1(C,\T_C)$ is zero.}
\end{remark}

\begin{lemma}\label{lem:HoriTan} 
With notation as above, one has
$$
\varphi_*( \N_{\varphi}) \cong \N'_{X/S}.
$$
In particular, 
\begin{equation}\label{eq:tan2}
H^i (\N'_{X/S}) \cong H^i(\N_{\varphi}), \;\; 0 \leq i \leq 1
\end{equation}
\end{lemma}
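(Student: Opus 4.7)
The plan is to construct a natural comparison map and verify it is an isomorphism by a local computation at the nodes of $X$. Over the smooth locus $X_{sm}$, where $\varphi$ is an isomorphism, both $\N'_{X/S}$ and $\varphi_*\N_{\varphi}$ coincide with $\N_{X/S}|_{X_{sm}}$, so the whole issue is concentrated at the points of $\Sing(X)$.

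For the comparison map, I would start from the surjection $j^*\T_S \twoheadrightarrow \N'_{X/S}$ (realizing $\N'_{X/S}$ as the image of the natural map $j^*\T_S \to \N_{X/S}$ given by contraction with the local equation of $X$), and from the defining surjection $\varphi^*\T_S \twoheadrightarrow \N_{\varphi}$ in \eqref{eq:nfi}. Combining these with the adjunction unit $j^*\T_S \to \varphi_*\varphi^*j^*\T_S = \varphi_*\varphi^*\T_S$ yields a natural map
\[
\beta : j^*\T_S \longrightarrow \varphi_*\N_{\varphi}.
\]
To factor $\beta$ through $\N'_{X/S}$, I need to check that $\beta$ vanishes on $\ker(j^*\T_S \to \N'_{X/S}) = \T_X$; equivalently, by adjunction, the composition $\varphi^*\T_X \to \varphi^*\T_S \to \N_{\varphi}$ must be zero. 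This is immediate locally: at a node with coordinates $(x,y)$ such that $X = V(xy)$, $\T_X$ is generated as an $\mathcal{O}_X$-module by $x\partial_x$ and $y\partial_y$, and at the preimage $q_1$ of the branch $y=0$ these pull back respectively to $x\partial_x \in \T_C$ (which vanishes in $\N_{\varphi}$) and to $0$; similarly at $q_2$. Thus $\beta$ induces the desired morphism $\gamma : \N'_{X/S} \to \varphi_*\N_{\varphi}$.

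The key step is the local isomorphism check at a node $p$. Pick coordinates $(x,y)$ on $S$ as above and let $q_1, q_2$ be the two preimages of $p$, on the branches $y=0$ and $x=0$ respectively. I identify $\N_{X/S}$ with $\mathcal{O}_X$ via the equation $f=xy$, and embed $\mathcal{O}_X \hookrightarrow \varphi_*\mathcal{O}_C = \mathcal{O}_{C,q_1}\oplus \mathcal{O}_{C,q_2}$ as the subring of pairs with equal value at the node. The map $j^*\T_S \to \N_{X/S}$ then sends $\partial_x \mapsto y$ and $\partial_y \mapsto x$, giving
\[
\N'_{X/S,p} = (x,y)\mathcal{O}_{X,p} = \mathfrak{m}_{q_1}\oplus \mathfrak{m}_{q_2} \subset \varphi_*\mathcal{O}_{C,p}.
\]
On the other hand, at each $q_i$ the sheaf $\N_{\varphi}$ is the free rank-one $\mathcal{O}_{C,q_i}$-module generated by the class of $\partial/\partial z_{3-i}$ (with $z_1=x$, $z_2=y$), and the natural injection $\N_{\varphi} \hookrightarrow \varphi^*\N_{X/S}$ sends this generator to $z_i$, identifying $\N_{\varphi,q_i}$ with $\mathfrak{m}_{q_i}$. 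Hence $(\varphi_*\N_{\varphi})_p$ is the same submodule $\mathfrak{m}_{q_1}\oplus \mathfrak{m}_{q_2}$ of $\varphi_*\mathcal{O}_{C,p}$, and chasing the construction shows that $\gamma$ sends $x \mapsto (x,0)$ and $y \mapsto (0,y)$, so it is exactly the identity on this submodule.

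The cohomological statement \eqref{eq:tan2} will then be automatic: $\varphi$ is finite, hence affine, so $R^i\varphi_*\N_{\varphi}=0$ for $i\geq 1$, and the Leray spectral sequence gives $H^i(\N'_{X/S}) \cong H^i(\varphi_*\N_{\varphi}) \cong H^i(\N_{\varphi})$. I expect the only real subtlety to be keeping the local identifications (of $\N_{X/S}$ with $\mathcal{O}_X$, of $\N_{\varphi,q_i}$ with $\mathfrak{m}_{q_i}$, and of $\mathcal{O}_X$ inside $\varphi_*\mathcal{O}_C$) compatible with the globally defined $\gamma$; once this bookkeeping is fixed, the verification is a short explicit calculation.
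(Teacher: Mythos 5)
Your argument is correct, and it is essentially an unpacking of the paper's proof rather than a different one: the paper disposes of the lemma by quoting \cite[p.~111]{Tan2}, noting that for a nodal curve the Jacobian ideal $\mathfrak J$ coincides with the conductor ideal $\mathfrak C$, so that $\N'_{X/S}=\mathfrak J\,\N_{X/S}=\mathfrak C\,\N_{X/S}\cong \varphi_*(\N_\varphi)$, whereas you build the comparison map $\gamma$ by hand and check it is an isomorphism locally at a node. Your local identity $\N'_{X/S,p}=(x,y)\Oc_{X,p}=\mathfrak m_{q_1}\oplus\mathfrak m_{q_2}=(\varphi_*\N_\varphi)_p$ is precisely the statement that the Jacobian ideal equals the conductor ideal at a node, so the underlying computation is the same; what your version buys is self-containedness and an explicitly natural $\gamma$, compatible with the surjections from $j^*\T_S$ and $\varphi^*\T_S$, which is exactly the compatibility implicitly used later in diagrams \eqref{eq:diag1} and \eqref{eq:diag2}, while the paper's version is shorter. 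You also spell out the step the paper leaves implicit for \eqref{eq:tan2}: $\varphi$ is finite, hence $R^i\varphi_*\N_\varphi=0$ for $i>0$ and the Leray spectral sequence gives $H^i(\N'_{X/S})\cong H^i(\N_\varphi)$. The only bookkeeping point worth stating explicitly in your write-up is the one you already flag, namely that for a nodal $X$ the equisingular normal sheaf of \cite[Proposition 1.1.9]{Ser} is the image of $\T_S\to\N_{X/S}$ (the Jacobian ideal twist), which is what legitimizes your identification of $\ker\bigl(j^*\T_S\to\N'_{X/S}\bigr)$ with $\T_X$.
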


\begin{proof} The reader is referred to \cite[p. 111]{Tan2} where, with the notation therein, 
since $X$ is nodal, 
the {\em Jacobian ideal} ${\mathfrak J}$ coincides with the {\em conductor ideal} ${\mathfrak C}$, i.e. 
$\N'_{X/S} = {\mathfrak J}  \; \N_{X/S} = {\mathfrak C} \; \N_{X/S} \cong \varphi_*( \N_{\varphi})$. 
\end{proof}  

On the other hand, if $N = \Sing(X)$, 
let $$\mu_N : \widetilde{S} \longrightarrow S$$be the blow-up of $S$ along $N$. Thus, 
$\mu_N$ induces the embedded resolution of $X$ in $S$, i.e. we have the following commutative diagram:
\begin{displaymath}
\begin{array}{clccl}
C &  & \subset & \tilde{S} & \\
\downarrow & \!\!\!\!^{\varphi} & & \downarrow & \!\!\!^{\mu_N} \\
X & & \subset & S & ,
\end{array}
\end{displaymath}
where  $\mu_N|_C = \varphi$.

Since $$\varphi^*(\T_S) \cong \mu_N^*(\T_S) \otimes \Oc_C$$and since 
$\mu_N|_C = \varphi$, we also have the exact sequence on $\tilde{S}$:
\begin{equation}\label{eq:stilde1}
0 \longrightarrow \mu_N^*(\T_S) (-C) \longrightarrow \mu_N^*(\T_S) \longrightarrow \varphi^*(\T_S) \longrightarrow 0.
\end{equation}We denote by 
\begin{equation}\label{eq:lambda}
\mu_N^*(\T_S) \stackrel{\lambda}{\longrightarrow}  \N_{\varphi}\longrightarrow 0
\end{equation}the composition of the two surjections in  \eqref{eq:stilde1} and \eqref{eq:nfi}.

\begin{definition}\label{def:kerlambda} With notation as above, let
$$ \mu_N^*(\T_S) \langle C\rangle := {\rm Ker} (\lambda).$$
\end{definition}From \eqref{eq:nfi}, \eqref{eq:stilde1} and \eqref{eq:lambda}, 
the sheaf $\mu_N^*(\T_S)\langle C\rangle$ sits in the following natural exact diagram:
 \begin{equation}\label{eq:diag1}
\begin{array}{lcccccr}
       & 0                              &      & 0                  &     &                & \\
       & \downarrow                     &      & \downarrow         &     &                & \\
0 \longrightarrow  & \mu_N^*(\T_S) (- C)            & \stackrel{=}{\longrightarrow}  & \mu_N^*(\T_S) (-C) & \longrightarrow &        0       & \\
       &  \downarrow                    &      & \downarrow         &     &           \downarrow     & \\
0 \longrightarrow  & \mu_N^*(\T_S) \langle C\rangle & \longrightarrow  & \mu_N^*(\T_S)      & \stackrel{\lambda}{\longrightarrow} &   \N_{\varphi} & \longrightarrow 0\\
       & \downarrow^{\tau}                     &      & \downarrow         &     &     ||         & \\
0 \longrightarrow  & \T_C                           & \longrightarrow  & \varphi^*(\T_S)    & \longrightarrow &   \N_{\varphi} & \longrightarrow 0\\
       & \downarrow                     &      & \downarrow         &     &     \downarrow & \\
       & 0                              &      & 0                  &     &         0      &
\end{array}
\end{equation}

We have

\begin{proposition}\label{prop:flam} Let $(S,X) \in \mathcal{V}_{p,\delta}$. Then 
\begin{equation}\label{eq:iso}
H^i(S, \T_S \langle X \rangle) \cong H^i (\widetilde{S}, \mu_N^*(\T_S) \langle C \rangle), \; \; 0 \leq i \leq 2.
\end{equation}

In particular, the locally trivial deformations of the pair $(S,X)$ are also governed by the sheaf 
$ \mu_N^*(\T_S) \langle C \rangle$, i.e. 
\begin{itemize}
\item the obstructions lie in $H^2(\widetilde{S}, \mu_N^*(\T_S) \langle C \rangle)$;
\item first-order, locally trivial deformations are parametrized by $H^1(\widetilde{S}, \mu_N^*(\T_S) \langle C \rangle)$;
\item infinitesimal automorphisms are parametrized by $H^0(\widetilde{S}, \mu_N^*(\T_S) \langle C \rangle)$.
\end{itemize}

\end{proposition}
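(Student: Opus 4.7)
The plan is to deduce the cohomological isomorphism \eqref{eq:iso} from two sheaf-theoretic statements on $S$, via the Leray spectral sequence for $\mu_N$. Specifically, I aim to prove
\[
\mu_{N*}\bigl(\mu_N^*(\T_S)\langle C\rangle\bigr) \;\cong\; \T_S\langle X\rangle \qquad\text{and}\qquad R^i\mu_{N*}\bigl(\mu_N^*(\T_S)\langle C\rangle\bigr)=0 \;\text{ for all } i\ge 1.
\]
Once both are established, the Leray spectral sequence collapses and yields the required isomorphisms in degrees $0,1,2$. The statements on obstructions, first-order locally trivial deformations and infinitesimal automorphisms are then immediate by combining \eqref{eq:iso} with Proposition \ref{prop:beau} applied to $(S,X)$.

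To produce the sheaf identification and vanishing I would push the middle horizontal exact sequence of diagram \eqref{eq:diag1},
\[
0\longrightarrow \mu_N^*(\T_S)\langle C\rangle \longrightarrow \mu_N^*(\T_S) \stackrel{\lambda}{\longrightarrow} \N_{\varphi} \longrightarrow 0,
\]
forward along $\mu_N$. The two outer terms are easy to control: since $\mu_N$ is the blow-up of the smooth surface $S$ at the finite reduced set $N$ of nodes of $X$, one has $R^i\mu_{N*}\Oc_{\tilde S}=0$ for $i\ge 1$, and the projection formula applied to the locally free sheaf $\T_S$ gives $\mu_{N*}\mu_N^*(\T_S)\cong \T_S$ together with $R^i\mu_{N*}\mu_N^*(\T_S)=0$ for $i\ge 1$. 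On the other hand, $\N_{\varphi}$ is supported on $C$ and $\mu_N|_C=\varphi$ is a finite morphism, so $R^i\mu_{N*}\N_{\varphi}=0$ for $i\ge 1$, while Lemma \ref{lem:HoriTan} identifies $\mu_{N*}\N_{\varphi}=\varphi_*\N_{\varphi}\cong\N'_{X/S}$. Applying $R\mu_{N*}$ to the displayed sequence therefore yields the exact sequence on $S$
\[
0\longrightarrow \mu_{N*}\bigl(\mu_N^*(\T_S)\langle C\rangle\bigr) \longrightarrow \T_S \stackrel{\bar\lambda}{\longrightarrow} \N'_{X/S} \longrightarrow R^1\mu_{N*}\bigl(\mu_N^*(\T_S)\langle C\rangle\bigr) \longrightarrow 0,
\]
together with the vanishing $R^i\mu_{N*}\bigl(\mu_N^*(\T_S)\langle C\rangle\bigr)=0$ for $i\ge 2$.

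The decisive step is to identify $\bar\lambda$ with the surjection $\T_S\twoheadrightarrow\N'_{X/S}$ of \eqref{eq:txy} that defines $\T_S\langle X\rangle$. From diagram \eqref{eq:diag1}, $\lambda$ factors as the composition $\mu_N^*(\T_S)\twoheadrightarrow\varphi^*(\T_S)\twoheadrightarrow\N_{\varphi}$; pushing this factorisation forward and using both $\mu_{N*}\mu_N^*(\T_S)=\T_S$ and the identification $\varphi_*\N_{\varphi}\cong\N'_{X/S}$ of Lemma \ref{lem:HoriTan} (which rests on the equality of the Jacobian and conductor ideals at a node), realises $\bar\lambda$ as the natural composite $\T_S\to\T_S|_X\to\N_{X/S}\twoheadrightarrow\N'_{X/S}$ of \eqref{eq:txy}. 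This is the point I expect to require the most care: the rest of the argument is a routine application of the projection formula and of the finiteness of $\mu_N|_C$. Once this compatibility is verified, surjectivity of $\bar\lambda$ forces $R^1\mu_{N*}\bigl(\mu_N^*(\T_S)\langle C\rangle\bigr)=0$ and identifies $\ker\bar\lambda$ with $\T_S\langle X\rangle$, whence \eqref{eq:iso} and the deformation-theoretic consequences follow as explained above.
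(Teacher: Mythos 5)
Your proposal is correct and follows essentially the same route as the paper: push the middle row of diagram \eqref{eq:diag1} forward along $\mu_N$, use the projection formula and Lemma \ref{lem:HoriTan} to identify $\mu_{N*}\bigl(\mu_N^*(\T_S)\langle C\rangle\bigr)$ with $\T_S\langle X\rangle$ via the defining sequence \eqref{eq:txy}, and conclude by Leray. The only difference is that you spell out the vanishing of the higher direct images $R^i\mu_{N*}$ (which the paper subsumes under ``$\mu_N$ is birational, by Leray's isomorphism''), a welcome but minor amplification.
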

\begin{proof} Observe that, in this situation, we have the exact sequence \eqref{eq:tsx}, i.e.:
\begin{equation}\label{eq:ganza1}
0 \longrightarrow \T_S \langle X \rangle \longrightarrow \T_S \longrightarrow \N'_{X/S} \longrightarrow 0. 
\end{equation}On the other hand, as above, let
$\mu_N : \; \widetilde{S} \longrightarrow S $ be the blow-up of $S$ along $N$ and let 
$C \subset \widetilde{S}$ be the proper transform of $X \subset S$. From the second row of \eqref{eq:diag1}, we get
\begin{equation}\label{eq:ganza2}
0 \longrightarrow \mu_N^*(\T_S) \langle C \rangle \longrightarrow \mu_N^*(\T_S) \longrightarrow \N_{\varphi} \longrightarrow 0. 
\end{equation}Since $S$ is smooth then, by the projection formula, we have 
$$
{\mu_N}_*(\mu_N^*(\T_S))  \cong  \T_S \otimes {\mu_N}_*(\Oc_{\widetilde{S}}) \cong \T_S.
$$
Now we apply ${\mu_N}_*$ to \eqref{eq:ganza2}, getting
\begin{equation}\label{eq:ganza3}
0 \longrightarrow {\mu_N}_*(\mu_N^*(\T_S) \langle C \rangle) \longrightarrow {\mu_N}_*(\mu_N^*(\T_S)) \longrightarrow \varphi_*(\N_{\varphi}),  
\end{equation}where the equality 
${\mu_N}_*(\N_{\varphi}) = \varphi_*(\N_{\varphi})$ directly follows from the facts that $\N_{\varphi}$ 
is a line-bundle on $C$ and that, as above, ${\mu_N}|_C = \varphi$.

By using the exact sequences \eqref{eq:ganza1} and \eqref{eq:ganza3}, together with 
Lemma \ref{lem:HoriTan}, we get: 

\begin{equation}\label{eq:diag2}
\begin{array}{lcccccr}
0 \longrightarrow  & \T_S \langle X \rangle & \longrightarrow  & \T_S   & \longrightarrow &   \N'_{X/S}  & \longrightarrow 0\\
       & \downarrow                     &      & \downarrow^{\cong}         &     &        \downarrow^{\cong}     & \\
0 \longrightarrow  & {\mu_N}_*(\mu_N^*(\T_S) \langle C \rangle )         & \longrightarrow  & {\mu_N}_*(\mu_N^*(\T_S)) & \longrightarrow &  \varphi_*(\N_{\varphi})    &  \longrightarrow 0.    
\end{array}
\end{equation}
This implies $\T_S \langle X \rangle \cong {\mu_N}_*(\mu_N^*(\T_S) \langle C \rangle )$. Since $\mu_N$ is birational, 
by Leray's isomorphism we obtain \eqref{eq:iso}.

The last part of the statement directly follows from Proposition \ref{prop:beau}. 
\end{proof}

Let now  $V_{p,\delta} \to \mathcal{V}_{p,\delta}$ be an {\'e}tale atlas and let 
\begin{equation}\label{eq:diag}
\xymatrix{
\mathcal X  \ar[dr]_{^{\rho}} \ar@{^{(}->}[r]  & \mathcal S \ar[d]  \\
& V_{p,\delta},
}
\end{equation}
be the family induced by the universal one.  Since $V_{p,\delta}$ is a smooth, in particular normal,  scheme
we are in position to apply the results in \cite[p.\;80]{T}, i.e.
there exists a commutative diagram
\begin{equation}\label{eq:diag'}
\xymatrix{
\mathcal C  \ar[dr]_{\widetilde{\rho}} \ar[r]^{\Phi} & \mathcal X \ar[d]^{\rho} \\
& V_{p,\delta},
}
\end{equation}
where

\begin{itemize}
\item $\Phi$ is the normalization morphism,
\item $\widetilde{\rho}$ is smooth,
\item $\widetilde{\rho}$ gives
the simultaneous desingularization of the fibres of the family $\rho$; namely, for each
$v \in V_{p,\delta}$, ${\mathcal C}(v)$ is a smooth curve of genus $g = p- \delta$, which is
the normalization of the irreducible, nodal curve ${\mathcal X}(v)$.
\end{itemize}

Thus, as in \eqref{eq:cp}, the family $\widetilde{\rho}$ defines the natural morphism
\begin{equation}\label{eq:cg}
\xymatrix{ V_{p,\delta}
\ar@{->}[r]^{c_{p,\delta}} &  \M_g, }
\end{equation}by sending $v$ to the birational isomorphism class $[{\mathcal C}(v)] \in \M_g$.

As discussed in \S\,\ref{S:MukBea} for smooth curves, 
by Propositions \ref{prop:beau} and \ref{prop:flam}, and by passing to cohomology in the left-hand-side 
column of diagram \eqref{eq:diag1},
the differential of the morphism $c_{p,\delta}$ at a point $v\in V_{p,\delta}$ parametrizing  a pair $(S,X)$  can be identified with the cohomology map
\begin{equation}\label{eq:cg*}
H^1(\mu_N^*(\T_S) \langle C\rangle) \stackrel{H^1(\tau)}{\longrightarrow} H^1(\T_C).
\end{equation}

As in \eqref{eq:dimp}, we have
\begin{eqnarray}\label{eq:dimg}
\begin{array}{ccl}
 {\rm dim}( V_{p,\delta}) > {\rm dim} (\M_g), & \; {\rm for} \; & g \leq 10, \\
{\rm dim}( V_{p,\delta}) = {\rm dim} (\M_g), &  \; {\rm for} \; & g = 11, \\
{\rm dim}( V_{p,\delta}) < {\rm dim} (\M_g), & \; {\rm for} \; & g \geq 12.
\end{array}
\end{eqnarray}

In particular, independently from $p \geq 3$, for any $g \leq 11$ the morphism $c_{p,\delta}$ is expected to be dominant. Moreover,  as in the smooth case of \S\;\ref{S:MukBea}, if we denote by $F_{p,\delta}$ the general fibre of $c_{p,\delta}$ then, 
for any $ p \geq 3$, the {\em expected dimension} of $F_{p,\delta}$ is:

\begin{equation}\label{eq:expdim2}
{\rm expdim} (F_{p,\delta}) = \Big\{ \begin{array}{ccl}
22 - 2 g & \; {\rm for} \; & g \leq 10, \; p \geq  g +1 ,\\
 0 & \; {\rm for} \; & g \geq 11, \; p \geq g+1 .
\end{array}
\end{equation}

\section{General moduli for $g < p \leq 11$}\label{S:p<12} 

The aim of this section is to give some partial affirmative answers to the above expectations. Namely, we show that 
on a general, primitively polarized $K3$ surface of genus $ 3 \leq p \leq 11$, the normalizations of $\delta$-nodal 
curves in $|H|$, with $\delta >0$ as in \eqref{eq:boundpdelta}, define families of smooth curves with general moduli. 

Precisely, by recalling Proposition \ref{prop:smst}, we have:

\begin{theorem}\label{thm:main} Let $ 3 \leq p \leq 11$ be an integer. Let $\delta  $ and 
$g = p- \delta $ be positive integers as in \eqref{eq:boundpdelta}. 

Let $V \subseteq V_{p,\delta}$ be any irreducible component and let
$$c_{p,\delta}|_{V} : V \longrightarrow \M_g$$be the restriction to $V$ of the morphism
$c_{p,\delta}$ as in \eqref{eq:cg}.

Then, for any $ 2 \leq g < p  \leq  11$, $c_{p,\delta}|_V$ is dominant. In particular, for the general 
fibre $F_{p,\delta}$ of $c_{p,\delta}|_V$, we have:
$${\rm dim}(F_{p,\delta}) = 22-2g = {\rm expdim}(F_{p,\delta}).$$ 
\end{theorem}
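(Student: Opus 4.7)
The plan is to show, for each irreducible component $V \subseteq V_{p,\delta}$, that the differential of $c_{p,\delta}|_V$ is surjective at some point $(S,X) \in V$; by semicontinuity this forces dominance, and the asserted formula for $\dim F_{p,\delta}$ is then the elementary count $\dim V - \dim \M_g = (19+g)-(3g-3) = 22-2g$. By \eqref{eq:cg*} the differential at $(S,X)$ is identified with
\[
H^1(\tau)\colon H^1(\widetilde{S}, \mu_N^*(\T_S)\langle C\rangle) \longrightarrow H^1(C, \T_C),
\]
where, as in Proposition \ref{prop:flam}, $N=\Sing(X)$, $\mu_N : \widetilde{S}\to S$ is the blow-up of $N$, and $C$ is the proper transform of $X$. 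From the long exact sequence attached to the left column of diagram \eqref{eq:diag1}, the obstruction to surjectivity of $H^1(\tau)$ lies in $H^2(\widetilde{S},\mu_N^*(\T_S)(-C))$, so the theorem reduces to the vanishing of this group for some $(S,X)\in V$.

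To translate this back to $S$, I would use $K_{\widetilde{S}} = \Oc_{\widetilde{S}}(E)$ (since $K_S = \Oc_S$) together with $\mu_N^*X = C + 2E$, where $E = \sum_{x \in N} E_x$. Serre duality on $\widetilde{S}$ then gives
\[
H^2(\widetilde{S},\mu_N^*(\T_S)(-C))^{\vee} \cong H^0(\widetilde{S},\mu_N^*(\Omega_S^1(H)) \otimes \Oc_{\widetilde{S}}(-E)).
\]
Combining the projection formula with $\mu_{N,*}\Oc_{\widetilde{S}}(-E) = \I_N$ and $R^1\mu_{N,*}\Oc_{\widetilde{S}}(-E) = 0$ identifies this space with $H^0(S, \Omega_S^1(H) \otimes \I_N)$. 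Thus dominance of $c_{p,\delta}|_V$ is equivalent to the existence of a pair $(S,X) \in V$ such that
\[
H^0(S, \Omega_S^1(H) \otimes \I_N) = 0.
\]

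By Proposition \ref{prop:smst}(iii), $\pi_\delta|_V \colon V \to \B_p^0$ is smooth and dominant, so I may choose $(S,X) \in V$ whose image $(S,H)$ is general in $\B_p$. For $p \in \{3,\ldots,9,11\}$, Proposition \ref{prop:beaumuk}(i) gives the stronger vanishing $h^0(\Omega_S^1(H)) = 0$, and the conclusion is immediate; in particular this already accounts for the striking case $p=11, \delta=1$, where one competes against equality $\dim V_{11,1}-\dim\M_{10} = 2 = \operatorname{expdim}F_{11,1}$.

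The main obstacle is the case $p=10$, in which Proposition \ref{prop:beaumuk}(ii) only yields a one-dimensional space of sections, spanned by some $\sigma_S$. For general $(S,H)$ of genus ten, $\Pic(S) = \ZZ H$ and $h^0(\Omega_S^1) = 0$, so $\sigma_S$ has no divisorial component of zeros; hence $Z(\sigma_S)$ is a finite subscheme of $S$ (of length $c_2(\Omega_S^1(H)) = 42$). The required vanishing $H^0(\Omega_S^1(H) \otimes \I_N) = 0$ then amounts to the condition that at least one node of $X$ avoid $Z(\sigma_S)$. Since imposing a node at a prescribed point of $S$ gives three independent conditions on $|H|$, while each irreducible component of $V_{|H|,\delta}(S)$ has the expected dimension $g = 10-\delta \ge 2$ by \eqref{eq:reg}, a dimension count shows that the nodal curves with \emph{all} nodes in the finite set $Z(\sigma_S)$ form a proper sublocus of the fibre $V_s = V \cap \pi_\delta^{-1}(S,H)$. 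Combined with the smooth dominance of $\pi_\delta|_V$, this produces the required point $(S,X) \in V$ and concludes the proof.
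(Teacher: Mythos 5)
Your proposal follows essentially the same route as the paper's proof: reduce surjectivity of the differential of $c_{p,\delta}|_V$ to the vanishing $H^2(\widetilde S,\mu_N^*(\T_S)(-C))=0$, translate this via Serre duality and pushforward to $H^0(S,\I_{N/S}\otimes\Omega^1_S(H))=0$, conclude from Proposition \ref{prop:beaumuk}(i) for $3\le p\le 9$ and $p=11$, and for $p=10$ rule out that the unique section of $\Omega^1_S(H)$ vanishes at all nodes of a general member of a Severi variety component over a general $(S,H)$. Your handling of the $p=10$ endgame (finiteness of the zero locus via $\Pic(S)=\ZZ H$ and $h^0(\Omega^1_S)=0$, plus a dimension count on node positions) is the same argument as the paper's, spelled out in slightly more detail than its one-line conclusion.
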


\begin{remark}\label{rem:main} {\normalfont (1) Recall that, if $(S,H)$ is general in $\B_{10}$, smooth curves in 
$|H|$ are not with general
moduli (cf. Theorem \ref{thm:mukai}(ii)). On the contrary, from Theorem \ref{thm:main}, if $(S,H)$ is general in 
$\B_{11}$, then nodal curves in $V_{|H|,1}$ have normalizations of geometric genus $g=10$ that are curves with 
general moduli. 

\noindent
(2) At the same time, if $(S,H)$ is general in $\B_{10}$, from Theorem \ref{thm:main}, 
it follows that the general, irreducible, $\delta$-nodal curve in the linear system 
$|H|$ has a normalization of genus $g = 10 - \delta $ with general moduli. 

In particular, we have the following situation: consider the rational map
$$\overline{\KC}_{10} \dashrightarrow \overline{\M}_{10},$$which is defined on the open substack 
$\overline{\KC}_{10}^0$ of pairs $(S,X)$ s.t. $S = (S,H)$ is in $\B_{10}^0$ and $X$ is {\em nodal} and irreducible. 
Let$$ \overline{c}_{10}: \overline{\KC}_{10}^0 \longrightarrow \overline{\M}_{10}$$be the induced morphism. 
Then
${\rm Im} (\overline{c}_{10})$ is a divisor in $\overline{\M}_{10}$ such that 
$${\rm Im} (\overline{c}_{10}) \cap \partial \overline{\M}_{10} \subset \Delta_0$$ is a divisor whose general 
element has normalization a general curve of genus $9$. 

Since this divisor has dimension $${\rm dim} (\Delta_0) - 1 = {\rm dim} (\overline{\M}_{10}) - 2 = 25,$$the universal 
curve
\begin{displaymath}
\begin{array}{c}
{\mathcal X} \\ 
\downarrow \\ 
{\rm Im} (\overline{c}_{10}) \cap \Delta_0
\end{array}
\end{displaymath}induces a rational map 
$$({\rm Im} (\overline{c}_{10}) \cap \Delta_0) \dashrightarrow \M_9$$
that is dominant by Theorem \ref{thm:main}, and whose general fibre has dimension $1$. Precisely, 
this fibre determines a $1$-dimensional subscheme in the second symmetric product of the curve of genus $9$ 
parametrized by the image point in $\M_9$.

}

\end{remark}

\begin{proof}[Proof of Theorem \ref{thm:main}] From  \eqref{eq:cg*} and the left vertical column in \eqref{eq:diag1}, a sufficient condition for the surjectivity of the differential of $c_{p,\delta}|_V$ at a point 
$v \in V$ parametrizing a pair $(S,X)$ is that
\begin{equation}\label{eq:cgsu}
H^2(\mu_N^*(\T_S) (-C)) = (0).
\end{equation}If $E= \sum_{i=1}^{\delta} E_i$ denotes the $\mu_N$-exceptional divisor on $\widetilde{S}$, 
by Serre duality, $$H^2(\mu_N^*(\T_S) (-C)) \cong H^0(\mu_N^*(\Omega^1_S)(C + E))
\cong H^0(\mu_N^*(\Omega^1_S (H))(- E)),$$since $\mu_N^*(X) = C + 2 E$
and since $X \sim H$ on $S$. 

By the Leray isomorphism, $$ H^0(\mu_N^*(\Omega^1_S (H))(- E))
\cong H^0(\Ii_{N/S} \otimes \Omega_S^1(H)),$$where $\Ii_{N/S}$ denotes the ideal sheaf of $N $ in $S$.
One has the exact sequence:
\begin{equation}\label{eq:aiuto}
0 \longrightarrow  \Ii_{N/S} \otimes \Omega_S^1(H) \longrightarrow \Omega^1_S(H) \longrightarrow \Omega_S^1(H)|_N \longrightarrow 0. 
\end{equation}

For what concerns the cases either $ 3 \leq p \leq 9$ or $p =11$, from \eqref{eq:aiuto} and from 
Proposition \ref{prop:beaumuk}(i), we get that also $H^0(\Ii_{N/S} \otimes \Omega_S^1(H)) = (0)$, 
which concludes the proof in this case. 

For what concerns the case $p =10$, from Proposition \ref{prop:beaumuk}(ii) we know that, 
if $(S,H)$ is general in $\B_{10}$, then $H^0(\Omega^1_S(H)) \cong \CC$. For any $\delta$ as above, let 
$W \subseteq V_{|H|, \delta}$ be any irreducible component of the Severi variety of irreducible, $\delta$-nodal curves in $|H|$ on $S$. Let $[X] \in W$ be a general point and let $N = \Sing(X)$. 

From the exact sequence \eqref{eq:aiuto}, we know that 
$$ H^0(\Ii_{N/S} \otimes \Omega_S^1(H)) \hookrightarrow H^0(\Omega^1_S(H)) \cong \CC.$$We claim that this implies 
$h^0(\Ii_{N/S} \otimes \Omega_S^1(H)) = 0$, so the statement. 

Indeed, if $h^0(\Ii_{N/S} \otimes \Omega_S^1(H)) \neq  0$, then we would have 
$$H^0(\Ii_{N/S} \otimes \Omega_S^1(H)) \cong H^0(\Omega^1_S(H)),$$which means that 
the unique (up to scalar multiplication) non-zero global section of $H^0(\Omega^1_S(H))$ would pass through $N = \Sing(X)$. Since $X$ is general in a $g \geq 2$ dimensional family of curves, this is a contradiction. 
\end{proof}

Our result naturally leads to some questions.

\begin{question}\label{rem:ostruzione}
{\em When $X$ is smooth, the surjectivity of the Wahl map
$$
 \phi_{\omega_X} : \wedge^2 H^0(X,\omega_X)
 \longrightarrow H^0(X, \omega_X^{\otimes 3}) 
$$
is an obstruction to embed  $X$ in a $K3$ surface
(see \cite{W} and also \cite{BM}). 
It is natural to ask whether a similar result holds in the singular case. 
Precisely it would be interesting to understand whether there exists a Wahl-type
obstruction for a  smooth curve to have a nodal model lying on a $K3$ surface. 
}
\end{question}

\begin{question}\label{rem:iperell}
{\em Does the image of the map $c_{p,\delta}$ meet the hyperelliptic locus $\Ha_g$ in $\M_g$ ? If yes, and $(S_0,X_0)$ is a pair mapped into the 
hyperelliptic locus,  does the dimension 
of the locus of curves in 
$V_{|X_0|,\delta}(S_0)$ having hyperelliptic normalizations coincide with the expected one, which  is two, cf.
\cite[Lemma 5.1]{fkpS}? In fact, in  \cite[Lemma 5.1]{fkpS} we show that the dimension is two if $\Pic(S) \iso \ZZ[X_0]$. 
Answers to these questions, 
as explained in \cite[\S 6]{fkpS}, would yield a better understanding of rational curves  in the Hilbert square 
$S^{[2]}$ of the general $K3$ surface $S$, and then of its Mori cone ${\textrm{NE}}(S^{[2]})$.
The answers seem to be subtle, as they do not depend only on the geometric genus of $X_0$ but also on the number of nodes. For instance, for a general $(S,H)\in \B_p$, the locus of curves in 
$V_{|H|,\delta}(S)$ having hyperelliptic normalizations is {\it empty} when $\delta \leq (p-3)/2$ (\cite[Theorem 1]{fkp}),
and {\it nonempty}, and two-dimensional, for $p-3 \leq \delta \leq p-2$ (for $\delta =p-2$ is obvious; for $\delta = p-3$ cf. \cite[Theorem 5.2]{fkpS}). Of course, similar
questions may be asked for other gonality strata in $\M_g$.
}
\end{question}

\begin{question}\label{quest:pd} {\normalfont What about the cases $p \geq 12$? From \eqref{eq:dimg} and 
Theorem \ref{thm:main}, one could in principle expect the following situation: 

\vskip 10pt 

\noindent
{\em Let $p \geq 12$ be an integer. Let $\delta  $ and 
$g = p- \delta $ be positive integers as in \eqref{eq:boundpdelta}. For any irreducible 
component $V \subseteq V_{p,\delta}$, consider 
$$c_{p,\delta}|_V : V \longrightarrow \M_g.$$Then: 

\begin{itemize}
\item[(i)] for any $2 \leq g \leq 11 $, the 
morphism $c_{p,\delta}|_V$ is dominant.

\item[(ii)] for any $  12 \leq g <p $, $c_{p,\delta}|_V$ is generically finite.
\end{itemize} 
}

\vskip 7pt

Possible approaches to investigate the above expectations are the following: 

\vskip 5pt 

\noindent
$\bullet$ for (i), as in the proof of Theorem \ref{thm:main}, 
a sufficient condition for $c_{p,\delta}|_V$ to be smooth at a point $v \in V$ parametrizing a pair $(S,X)$ is 
$H^0(\Ii_{N/S} \otimes \Omega_S^1(H))  = (0)$, where $N = \Sing(X)$. By Remark \ref{rem:accazero}, one can 
say that 
\[ h^0((\Ii_{N/S} \otimes \Omega_S^1(H)) \geq h^0(\Omega_S^1(H))-2\delta \geq 2(p-\delta)-22=2g-22\](note that 
$2g - 22 \leq 0$ by assumption). On the other hand,  
\[ h^0((\Ii_{N/S} \otimes \Omega_S^1(H)) >0 \; \mbox{if} \;  g \geq 12, \]as it must be (cf. \eqref{eq:dimg}); 

\vskip 5pt

\noindent
$\bullet$ for (ii) above, a sufficient condition for $c_{p,\delta}|_V$ to be unramified at a point $v \in V$ parametrizing a pair $(S,X)$ is $H^1(\Ii_{N/S} \otimes \Omega_S^1(H))  = (0)$, where $N = \Sing(X)$.  
Observe that, for $p \geq 13$, from \eqref{eq:aiuto} we have the exact sequence
$$ \cdots \longrightarrow H^0(\Omega^1_S(H) ) \stackrel{ev^2_N}{\longrightarrow} H^0(\Omega_S^1(H)|_N) \longrightarrow H^1(\Ii_{N/S} 
\otimes \Omega_S^1(H)) \longrightarrow 0,$$since $H^1(\Omega_S^1(H)) = (0) $ from Proposition \ref{prop:beaumuk}(iii). 
Thus, the surjectivity of the evaluation morphism $ev^2_{N}$ as above would imply that the differential of 
$c_{p,\delta}|_V$ at $v$ is unramified.

} 
\end{question}

%%%%%%%%%%%%%%%%%%%%%%%%%%%%%(BIBLIOGRAPHY)%%%%%%%%%%%%%%%%%%%%%%%%%%%%%%%
%
%
%%%%%%%%%%%%%%%%%%%%%%%%%%%%%%%%%%%%%%%%%%%%%%%%%%%%%%%%%%%%%%%%%%%%%%%

\vskip 30pt

\noindent
{\small Flaminio Flamini, Dipartimento di Matematica, Universit\`a degli Studi di Roma "Tor Vergata", Viale
della Ricerca Scientifica, 1 - 00133 Roma, Italy. e-mail {\tt flamini@mat.uniroma2.it.}}

\vskip 10pt

\noindent
{\small Andreas Leopold Knutsen, Dipartimento di Matematica, 
Universit\`a degli Studi Roma Tre, Largo San %\linebreak 
Leonardo Mu\-rial\-do 1 - 00146 Roma, Italy. 
e-mail {\tt knutsen@mat.uniroma3.it.}}

\vskip 10pt

\noindent
{\small Gianluca Pacienza, Institut de Recherche Math\'ematique Avanc\'ee,
Universit\'e L. Pasteur et CNRS, rue R. Descartes - 67084 Strasbourg Cedex, France. e-mail {\tt pacienza@math.u-strasbg.fr.}}

\vskip 10pt

\noindent
{\small Edoardo Sernesi, Dipartimento di Matematica, 
Universit\`a degli Studi Roma Tre, Largo San %\linebreak 
Leonardo Murialdo 1 - 00146 Roma, Italy. 
e-mail {\tt sernesi@mat.uniroma3.it.}}

\end{document}